\newcommand{\nc}[1]{\newcommand{#1}}
\nc{\dmo}[1]{\DeclareMathOperator{#1}}
\nc{\C}{\mathbb{C}}
\nc{\N}{\mathbb{N}}
\nc{\R}{\mathbb{R}}
\nc{\Z}{\mathbb{Z}}
\nc{\A}{\mathcal{A}}
\nc{\EFC}{\mathcal{EC}^\dagger}
\nc{\FC}{\mathcal{C}^\dagger}
\nc{\FA}{\mathcal{A}^\dagger}
\nc{\NA}{\mathcal{N\!A}^\dagger}
\nc{\LkA}{\mathcal{A}_{\textrm{Lk}}^\dagger}
\dmo{\Lk}{Lk}
\dmo{\Aut}{Aut}
\dmo{\Mod}{Mod}
\dmo{\Homeo}{Homeo}
\dmo{\Diff}{Diff}
\dmo{\PGL}{PGL}
\theoremstyle{plain}
\newtheorem{theorem}{Theorem}[section]
\newtheorem{proposition}[theorem]{Proposition}
\newtheorem{lemma}[theorem]{Lemma}
\newtheorem{corollary}[theorem]{Corollary}
\newcommand{\p}[1]{\bigskip \noindent \emph{#1}.}
\title{Automorphisms of the fine curve graph}
\author{Adele Long}
\author{Dan Margalit}
\author{Anna Pham}
\author{Yvon Verberne}
\author{Claudia Yao}
\address{Adele Long\\ Smith College\\ 1 Chapin Way\\ Unit 8281\\ Northampton, MA 01063}
\email{AdeleLRLong@gmail.com}
\address{Dan Margalit \\ School of Mathematics\\ Georgia Institute of Technology \\ 686 Cherry St. \\ Atlanta, GA 30332}
\email{margalit@math.gatech.edu}
\address{Anna Pham \\ School of Natural Sciences \& Mathematics\\ The University of Texas at Dallas \\ 800 W Campbell Rd \\ Richardson, TX 75080}
\email{annaphamutd@gmail.com}
\address{Yvon Verberne \\ School of Mathematics\\ Georgia Institute of Technology \\ 686 Cherry St. \\ Atlanta, GA 30332}
\email{yverberne3@gatech.edu}
 \address{Claudia Yao \\ Department of Mathematics \\ The University of Chicago \\ 5734 S.~University Avenue \\ Chicago, IL 60637}
\email{wxyao10@outlook.com}
\thanks{This material is based upon work supported by the National Science Foundation under Grant Nos. DMS-181843 and DMS-1811941.  The fourth author is partially supported by an NSERC--PDF Fellowship.}
\begin{document}

\maketitle

\begin{abstract}
Building on work of Farb and the second author, we prove that the group of automorphisms of the fine curve graph for a surface is isomorphic to the group of homeomorphisms of the surface.  This theorem is analogous to the seminal result of Ivanov that the group of automorphisms of the (classical) curve graph is isomorphic to the extended mapping class group of the corresponding surface.
\end{abstract}


\section{Introduction}

The fine curve graph $\FC(S)$ was recently introduced by Bowden--Hensel--Webb as a combinatorial tool for studying $\Homeo(S)$, the group of homeomorphisms of a surface $S$.  Its vertices are essential simple closed curves in $S$ and the edges are pairs of disjoint curves.  There are two versions of $\FC(S)$ in the literature, according to whether the curves are smooth or topological.  In this paper we take the vertices to be topological curves.  

Let $S_g$ be the closed, connected, orientable surface of genus $g$. Our main theorem is that the group of simplicial  automorphisms of $\FC(S_g)$ is isomorphic to $\Homeo(S_g)$ when $g \geq 2$; in other words, $\FC(S_g)$ is a combinatorial model for $\Homeo(S_g)$.  More precisely, we have the following statement.

\begin{theorem}
\label{thm:main}
For $g \geq 2$ the natural map
\[
\eta : \Homeo(S_g) \to \Aut \FC(S_g)
\]
is an isomorphism.
\end{theorem}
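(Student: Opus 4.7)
The plan is to prove injectivity and surjectivity of $\eta$ separately, with surjectivity being the heart of the argument.

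For injectivity, suppose $\phi \in \Homeo(S_g)$ lies in the kernel of $\eta$, so $\phi(c) = c$ setwise for every essential simple closed curve $c$. Given any point $p \in S_g$, since $g \geq 2$ one can exhibit two essential simple closed curves $c_1, c_2$ whose intersection is exactly the single point $\{p\}$ (transverse in one point). Then $\phi(p) \in \phi(c_1) \cap \phi(c_2) = c_1 \cap c_2 = \{p\}$, so $\phi$ fixes $p$; varying $p$ yields $\phi = \mathrm{id}$.

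For surjectivity, the first step is to give purely combinatorial characterizations in $\FC(S_g)$ of the key geometric configurations, above all the relation ``$c_1$ and $c_2$ meet transversely in exactly one point.'' I expect this criterion to be phrased in terms of common neighbors in the graph together with information extracted from the link of a curve, building on the framework of Farb and the second author. Granted such a characterization, an automorphism $\alpha \in \Aut\FC(S_g)$ preserves the class of single-point transverse pairs, and one defines an action of $\alpha$ on points of $S_g$ by sending the unique intersection point of $(c_1,c_2)$ to that of $(\alpha(c_1),\alpha(c_2))$. Well-definedness requires a second combinatorial characterization: two such pairs pick out the same point of $S_g$ iff some witnessing curve configuration exists in $\FC(S_g)$. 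With these in hand, define $\phi : S_g \to S_g$ to be this pointwise action; it is a bijection because $\alpha^{-1}$ provides the inverse action. By construction $\phi(c) = \alpha(c)$ setwise for every essential simple closed curve $c$, and using that the topology of $S_g$ can be characterized in terms of the family of all essential simple closed curves (for instance, a subset is closed iff its intersection with every such curve is closed in the curve), this forces $\phi$ to be continuous, hence a homeomorphism. Finally $\eta(\phi) = \alpha$ by construction.

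The main obstacle is the combinatorial characterization of single-point transverse intersection and the accompanying rigidity statement needed to make the action on points well-defined. Unlike the classical curve graph, the fine graph carries no isotopy information, so Ivanov-style arguments based on intersection numbers and pants decompositions do not directly apply; one must extract the local topological intersection pattern of curves from raw adjacency data. I expect the decisive tool will be an analysis of the link of a curve in $\FC(S_g)$, recast in terms of a fine arc graph in the complement, so that the induced action on arcs pins down how curves cross at individual points. Once this local rigidity is secured, the passage from combinatorial automorphism to homeomorphism and the verification of continuity should follow in a relatively routine fashion.
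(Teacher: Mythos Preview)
Your injectivity argument is correct and matches the paper's Lemma~\ref{lemma:inj}. But your surjectivity strategy diverges substantially from the paper's, and the two places you flag as obstacles are genuine gaps that the paper resolves by a different route.

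The paper does \emph{not} build a homeomorphism directly from an action on points via degenerate torus pairs. Instead it factors through the extended fine curve graph $\EFC(S_g)$: bigon pairs in $\FC(S_g)$ are used to encode \emph{inessential curves}, not points, giving an extension homomorphism $\varepsilon:\Aut\FC(S_g)\to\Aut\EFC(S_g)$; then Theorem~\ref{thm:efc} (Farb--Margalit), which uses convergent sequences of curves to encode points, supplies the map to $\Homeo(S_g)$. Well-definedness of $\varepsilon$ on inessential curves is handled by Proposition~\ref{prop:curve pair pairs} (automorphisms preserve linked sharing pairs) together with Corollary~\ref{cor:lka} (connectivity of the fine linked arc graph). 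Continuity is absorbed into the convergent-sequence machinery of Section~\ref{sec:efc}.

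In your scheme, both difficulties resurface in harder form. First, the ``second combinatorial characterization'' you need---that two degenerate torus pairs determine the same point iff some configuration in $\FC(S_g)$ witnesses it---is exactly the sharing-pair problem, but for points rather than inessential curves; you would need an analogue of Proposition~\ref{prop:curve pair pairs} and a connectivity result analogous to Corollary~\ref{cor:lka}, neither of which you supply. Second, your continuity argument does not close: the criterion ``$A$ is closed iff $A\cap c$ is closed in $c$ for every essential $c$'' only yields continuity of $\phi$ if you already know each restriction $\phi|_c:c\to\alpha(c)$ is continuous, and nothing in your construction gives that---points of $c$ are encoded by auxiliary curves $d$ meeting $c$ once, and there is no a priori reason $\alpha$ moves nearby such $d$'s to nearby ones. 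The paper's detour through $\EFC$ and convergent sequences is precisely what makes both of these steps tractable.
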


Theorem~\ref{thm:main} should be viewed as an analogue of the celebrated theorem of Ivanov \cite[Theorem 1]{Ivanov} that the group of automorphisms of the curve graph for $S_g$ is isomorphic to the mapping class group of $S_g$ when $g \geq 3$.  There are some immediate complications that arise for the fine curve graph that distinguish it from the curve graph.  To begin, the graph $\FC(S)$ has uncountably many vertices, and is even locally uncountable.  Moreover, two vertices of $\FC(S)$ can bound (countably many) bigons and can intersect along (uncountably many) intervals, etc.  The main difficulty in our work is to overcome these topological pathologies.

There is a precursor to Theorem~\ref{thm:main} that we use in our proof.  Specifically, Farb and the second named author studied what we presently refer to as the \emph{extended fine curve graph} $\EFC(S)$.  The vertices of $\EFC(S)$ are all simple closed curves in $S$, including the inessential ones, and the edges again are pairs of disjoint curves.  Farb and the second author proved the following theorem.

\begin{theorem}[Farb--Margalit]
\label{thm:efc}
For any surface $S$ without boundary, the natural map
\[
\nu : \Homeo(S) \to \Aut \EFC(S)
\]
is an isomorphism.

\end{theorem}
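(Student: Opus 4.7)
\p{Proof proposal}

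The map $\nu$ is clearly a homomorphism, so the content of the theorem is that $\nu$ is both injective and surjective. For injectivity, suppose $\phi \in \Homeo(S)$ induces the identity on $\EFC(S)$, so $\phi$ fixes every simple closed curve setwise. For any $p \in S$, choose a Euclidean chart around $p$ and consider the family $\mathcal{F}_p$ of all round circles through $p$ in this chart, viewed as simple closed curves in $S$. Since any $q \neq p$ in the chart lies off some circle in $\mathcal{F}_p$, and points outside the chart are trivially off every such circle, we have $\bigcap_{c \in \mathcal{F}_p} c = \{p\}$. As $\phi$ fixes each such $c$ setwise, we conclude $\phi(p) = p$, so $\phi = \mathrm{id}$.

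For surjectivity, fix $\Phi \in \Aut \EFC(S)$; we wish to build $\phi \in \Homeo(S)$ with $\nu(\phi) = \Phi$. The plan is to reconstruct the topology of $S$ from the combinatorics of $\EFC(S)$ in three steps. First, characterize the set of inessential curves (those bounding disks) purely combinatorially; a natural candidate is that $c$ is inessential iff one side of $c$ contains no essential curve, which must be rephrased using only the adjacency relation in $\EFC(S)$. Second, for each inessential $c$, reconstruct its bounded disk $D_c$ together with the nesting partial order $D_c \subset D_{c'}$ combinatorially: a curve $c''$ should lie in $D_c$ iff it is disjoint from $c$ and lies on the disk side, a condition detectable via disjointness relations with curves known to lie outside $D_c$. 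Third, for each $p \in S$ pick a nested family of inessential curves $c_n$ with $\bigcap_n \overline{D_{c_n}} = \{p\}$ and define $\phi(p)$ to be the unique point of $\bigcap_n \overline{D_{\Phi(c_n)}}$. One then verifies that $\phi$ is well-defined (independent of the chosen nested family), bijective (by applying the same construction to $\Phi^{-1}$), continuous (both $\phi$ and $\phi^{-1}$ send shrinking disk families to shrinking disk families), and realizes $\Phi$ on all curves (inessential curves by construction, essential curves by expressing them in terms of nested disk neighborhoods).

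The main obstacle is the first step: producing a clean combinatorial characterization of inessentiality in a graph whose vertex set is uncountable and locally uncountable, and whose curves may overlap along intervals or bound many bigons. Once inessentiality is detected, the remainder reconstructs $S$ from its poset of nested disks in a fairly standard way. Some additional care is required in low-complexity cases, notably $S = S^2$ (where every curve is inessential and the ``inside'' versus ``outside'' of a curve must be singled out by an auxiliary choice, such as fixing a basepoint) and non-compact surfaces (where ends of $S$ must also be recovered, from ascending rather than descending chains of disks).
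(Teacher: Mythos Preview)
Your injectivity argument is essentially the paper's (the paper intersects two curves meeting only at $p$; you intersect a family of circles through $p$---same idea).

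For surjectivity, your route is genuinely different. The paper does not attempt to recognize inessential curves or nested disks at all. Instead it encodes a point $x\in S$ by a \emph{convergent sequence} of vertices: a sequence $(c_i)$ such that every neighborhood of $x$ contains all but finitely many $c_i$. The key lemma gives a purely combinatorial characterization: $(c_i)$ converges if and only if (i) some vertex intersects infinitely many $c_i$, and (ii) any two vertices that each intersect infinitely many $c_i$ must themselves intersect. This criterion references only the edge relation of $\EFC(S)$, so automorphisms preserve convergent sequences; one then sets $f_\alpha(x)=\lim(\alpha(c_i))$ and checks well-definedness and continuity by the same mechanism (coincident sequences, and convergent sequences of convergent sequences). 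What this buys is uniformity: no case analysis is needed, and the argument applies verbatim to $S^2$, to punctured surfaces, and to infinite-type surfaces.

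The gap in your plan is exactly the step you flag, and it is more than a technicality. Your candidate characterization of inessentiality is circular as stated, and the natural de-circularization---``one side of $c$ contains only curves whose link is a join,'' i.e.\ only separating curves---fails outside the closed positive-genus case. In the open annulus $\R^2\setminus\{0\}$ every simple closed curve is separating, so the criterion cannot distinguish a core curve from a disk-bounding one; likewise a peripheral curve in any punctured surface has a side (the punctured disk) consisting entirely of separating curves. Since the theorem is asserted for \emph{all} surfaces without boundary, you would need a uniform combinatorial detection of ``bounds a disk'' in $\EFC(S)$, and it is not clear one exists at that level of generality. The paper's convergent-sequence device is designed precisely to sidestep this issue.
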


We give the original (unpublished) proof of Theorem~\ref{thm:efc} in Section~\ref{sec:efc}.  In the unpublished preprint of Farb and the second author \cite{FM}, Theorem~\ref{thm:efc} is stated more generally, where the vertices of the graph are locally flat $(n-1)$-spheres in an $n$-manifold $M$ and edges are for disjointness (with the automorphism group being $\Homeo(M)$).  The proof is essentially the same in this greater generality.  We refer the reader to Farb's lecture for more details \cite{Farb}.

%

\p{Prior results on the fine curve graph} As mentioned, $\FC(S)$ was introduced by Bowden--Hensel--Webb \cite{BHW}.  Using the smooth version of $\FC(S_g)$, they show that $\Diff_0(S_g)$, the identity component of the group of diffeomorphisms of $S_g$, admits many unbounded quasi-morphisms.  It follows that $\Diff_0(S_g)$ is not uniformly perfect and its fragmentation norm is unbounded.

Bowden--Hensel--Mann--Militon--Webb studied the dynamics of the action of $\Homeo(S)$ on $\FC(S)$ \cite{BHMMW}.  They proved that some elements of $\Homeo(S)$ act parabolically and that asymptotic translation length is a continuous function on $\Homeo(S)$.  They also characterized the elements of $\Homeo(T^2)$ that act hyperbolically on $\FC(T^2)$ in terms of rotation sets.

\p{Outline of the proof of Theorem~\ref{thm:main}} In order to prove Theorem~\ref{thm:main}, we construct an inverse map $\Aut \FC(S_g) \to \Homeo(S_g)$.  Because Theorem~\ref{thm:efc} already gives a map $\Aut \EFC(S_g) \to \Homeo(S_g)$ we can construct our inverse as a composition
\[
\Aut \FC(S_g) \to \Aut \EFC(S_g) \to \Homeo(S_g).
\]
So, besides giving the proof of Theorem~\ref{thm:efc}, our main task is to construct a homomorphism $\epsilon : \Aut \FC(S_g) \to \Aut \EFC(S_g)$.  In other words, given an automorphism $\alpha$ of $\FC(S_g)$ and an inessential curve $e$ in $S_g$, we need to associate another inessential curve $\hat \alpha(e)$ in a natural way.  To this end, we associate to each such $e$ a pair of vertices $\{c,d\}$ of $\FC(S_g)$, called a bigon pair, and define $\hat \alpha(e)$ to be the inessential curve associated to the bigon pair $\{\alpha(c),\alpha(d)\}$.  See the right-hand side of Figure~\ref{fig:pairs} for an example of a bigon pair.  This definition requires us to prove that bigon pairs are preserved by automorphisms of $\FC(S_g)$, which is the content of Proposition~\ref{prop:curve pairs}, the main technical result of the paper.

The proof of Theorem~\ref{thm:efc} mirrors the proof of Theorem~\ref{thm:main}: we use certain collections of vertices---convergent sequences of curves---to encode points in a surface $S$ in order to define a map $\Aut \EFC(S) \to \Homeo(S)$.  As such, convergent sequences play the role in the proof of Theorem~\ref{thm:efc} that bigon pairs play in the proof of Theorem~\ref{thm:main}.

\p{The torus case} As defined, the graph $\FC(T^2)$ is not connected since disjoint curves in $T^2$ lie in the same homotopy class.  In fact, the connected components precisely correspond to the homotopy classes of essential curves.  Since all of these components are isomorphic, it follows that every permutation of the components is induced by some element of $\Aut \FC(T^2)$.  On the other hand, $\Homeo(T^2)$ preserves the geometric intersection number between components, so the set of permutations of components arising from $\Homeo(T^2)$ is countable (this set of permutations is isomorphic to $\PGL_2(\Z)$).  Thus, $\Aut \FC(T^2)$ properly contains $\Homeo(T^2)$ as a subgroup of uncountable index.

Bowden--Hensel--Webb give a modified definition of $\FC(T^2)$, where the edges connect curves that intersect at most once.  It seems plausible that with this definition the natural map $\Homeo(T^2) \to \Aut \FC(T^2)$ is an isomorphism.  However, our arguments for Theorem~\ref{thm:main} do not apply, since they rely heavily on the fact that edges correspond to disjointness.  If one can show that an element of $\Aut \FC(T^2)$ preserves the set of edges corresponding to disjoint curves, then it would be possible to apply many of our arguments to the torus case.

\p{Outline of the paper} We begin in Section~\ref{sec:char} by showing that automorphisms of $\FC(S_g)$ preserve certain configurations of curves in $S_g$.  Specifically, these are the aforementioned bigon pairs, which are used to define the map $\epsilon$ discussed above, and sharing pairs, which are used to show that $\epsilon$ is well defined.  In Section~\ref{sec:fa}, we prove that three different fine arc graphs are connected.  The last of these, the fine linked arc graph $\LkA(S)$, is again used to show that $\epsilon$ is well defined.  In Section~\ref{sec:efc} we give the original proof of Theorem~\ref{thm:efc}.  Finally, in Section~\ref{sec:pf} we assemble the preceding results to prove Theorem~\ref{thm:main}.

\p{Acknowledgments} We would like to thank Jonathan Bowden, Sebastian Hensel, Kathryn Mann, Emmanuel Militon, and Richard Webb for a helpful conversation.  We are particularly grateful to Kathryn Mann for suggesting that we characterize bigons between vertices of the fine curve graph.  We would also like to thank Benson Farb for sharing his unpublished work, joint with the second author, and also for comments on an earlier draft.  We are grateful to Sam Taylor for explaining to us the proof of the connectivity of the fine curve graph.  We would like to thank Andy Putman and Roberta Shapiro for comments on an earlier draft.  This work was completed as part of the REU program at Georgia Institute of Technology; we would like to thank the National Science Foundation and the School of Mathematics at Georgia Institute of Technology for their generous support.


\section{Characterizations of curve configurations}
\label{sec:char}

The goal of this section is to prove that two specific types of configurations of curves in a surface $S$ are preserved under automorphisms of $\FC(S)$.  The corresponding statements are Propositions~\ref{prop:curve pairs} and~\ref{prop:curve pair pairs}.  We restrict ourselves in this section to the case $g \geq 2$.  

We begin with some preliminaries.  We say that curves $c$ and $d$ are \emph{noncrossing} at a component $a$ of $c \cap d$ if there is a neighborhood $U$ of $a$ and a homeomorphism $U \to \R^2$ so that the image of $c \cap U$ and $d \cap U$ lie in the (closed) upper and lower half-planes of $\R^2$, respectively.  The curves $c$ and $d$ are noncrossing if they are noncrossing at each component of $c \cap d$.

\begin{figure}
\includegraphics[scale=.35]{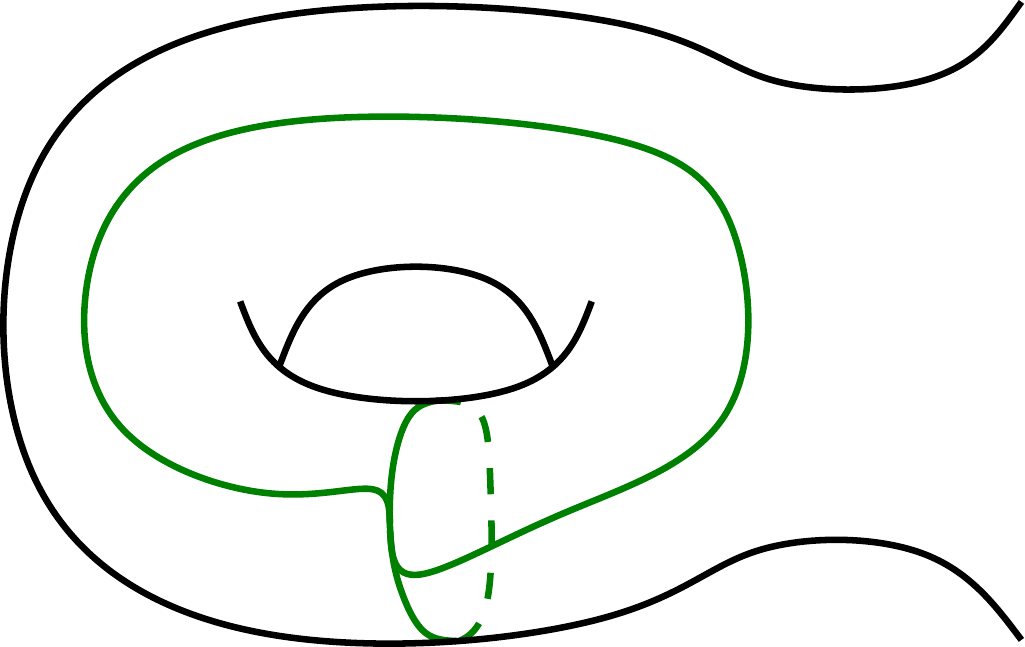}
\hspace*{.25in}
\raisebox{0.07\height}{\includegraphics[scale=.30]{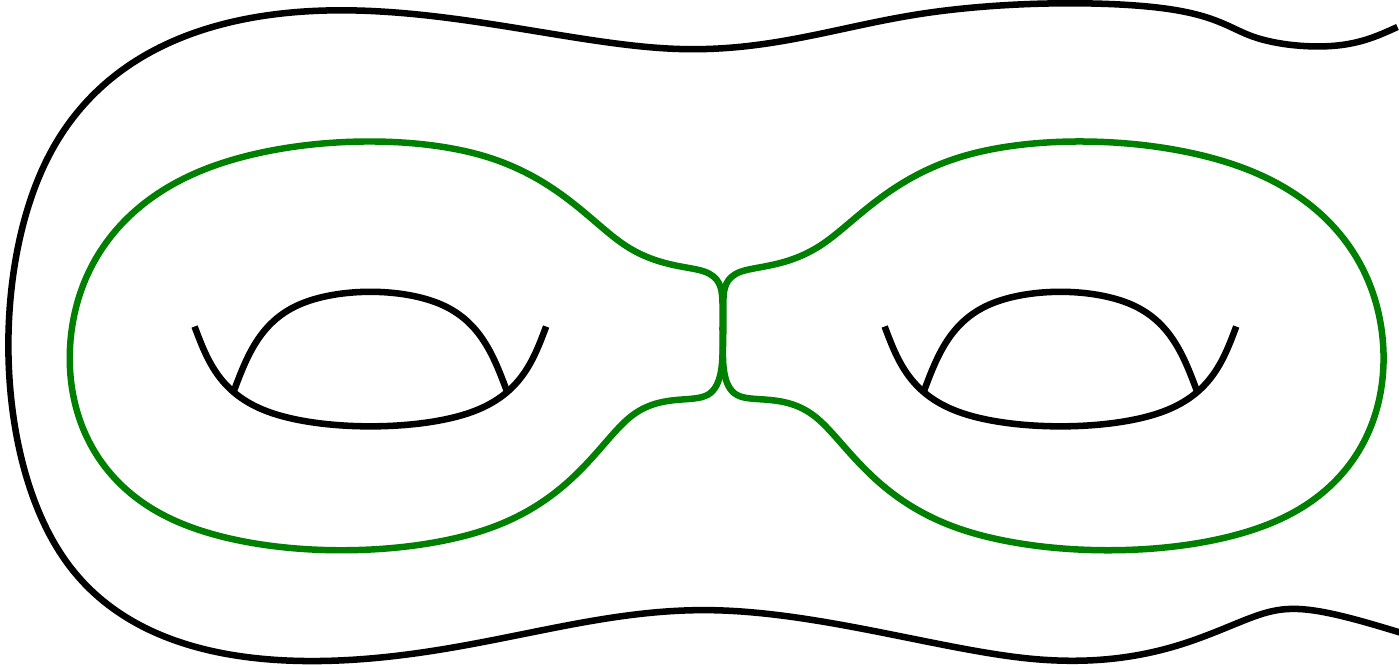}}
\hspace*{.25in}
\includegraphics[scale=.35]{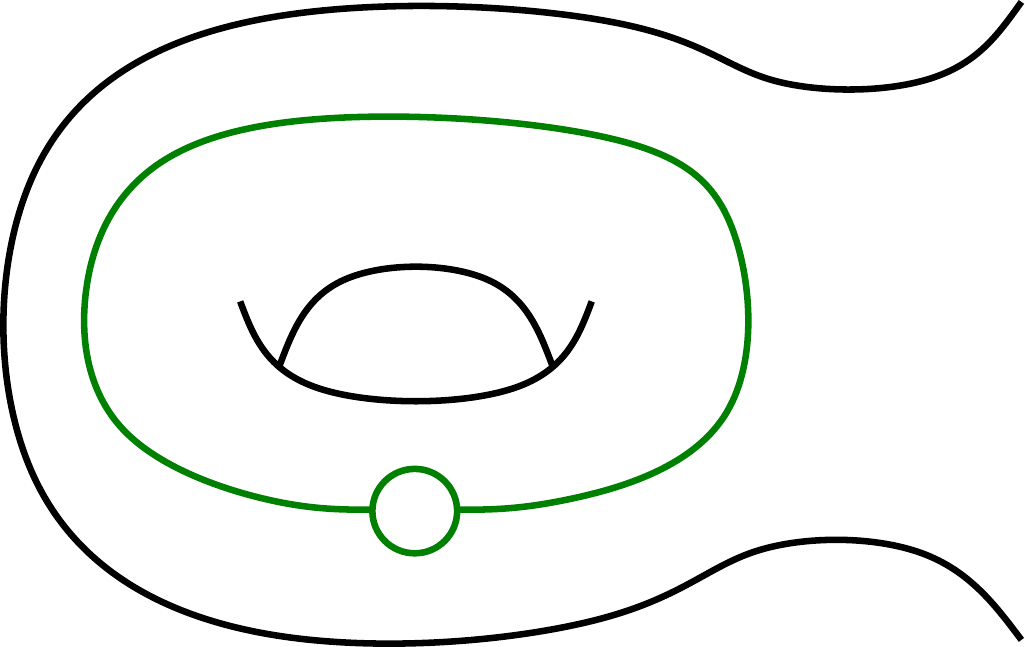}
\caption{\emph{Left to right:} a torus pair, a pants pair, and a bigon pair}
\label{fig:pairs}
\end{figure}

Next, we say that a pair of vertices $\{c,d\}$ in $\FC(S)$ is...
\begin{itemize}
\item a \emph{torus pair} if $c \cap d$ is a single interval and $c$ and $d$ cross at that interval, 
\item a \emph{pants pair} if $c \cap d$ is a single interval, $c$ and $d$ do not cross at that interval, and $c$ and $d$ are not homotopic, and 
\item a \emph{bigon pair} if $c \cap d$ is a nontrivial closed interval and $c$ and $d$ are homotopic.
\end{itemize}
If $c$ and $d$ form a torus pair, then there is a neighborhood of $c \cup d$ that is a torus with boundary.  If $c$ and $d$ form a pants pair, then there is a neighborhood of $c \cup d$ that is a pair of pants.  We say that a torus pair or pants pair $\{c,d\}$ is \emph{degenerate} if $c \cap d$ is a single point.  See Figure~\ref{fig:pairs} for pictures of the three types of pairs; in each case, we show the union of the two curves in the pair.  Given the union of a torus pair or a pants pair, there are three ways to write it as a union of two simple closed curves; in each case, the three pairs differ by a homeomorphism of $S$.

If $\{c,d\}$ is a nondegenerate torus pair (or pants pair) in $S$, then there is exactly one other essential curve $e$ contained in $c \cup d$; the curve $e$ is the closure in $S$ of the symmetric difference $c \triangle d$.  We also refer to $\{c,d,e\}$ as a \emph{torus triple} (or pants triple), since any two elements of the triple form a torus pair determining the third.

If $c$ and $d$ form a bigon pair, then $c$ and $d$ determine an inessential simple closed curve $e$; specifically, $e$ is the closure of the symmetric difference $c \triangle d$.  When the two curves in a bigon pair are nonseparating, we call the pair a \emph{nonseparating bigon pair}.

\begin{proposition}
\label{prop:curve pairs}
Let $g \geq 2$.  Then every automorphism of $\FC(S_g)$ preserves the set of nonseparating bigon pairs.
\end{proposition}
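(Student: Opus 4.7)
The plan is to produce a graph-theoretic characterization of nonseparating bigon pairs in $\FC(S_g)$, after which preservation under an automorphism is automatic. The overall strategy is in the spirit of Ivanov's original argument: stratify the non-adjacent pairs of $\FC(S_g)$ by the topological type of $c\cup d$ and show that each stratum can be recognized combinatorially.

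First I would identify the broader class of pairs $\{c,d\}$ whose intersection consists of a single nontrivial interval---these are exactly the torus, pants, and bigon pairs. A natural attempt is to pick this class out as the non-adjacent pairs whose common link $N(c)\cap N(d)$ is ``as large as possible,'' since pairs with more complicated intersection eat up more of $S_g$ and kill off more common neighbors. I would then separate the crossing (torus) case from the noncrossing (pants, bigon) case: a torus pair has essential algebraic intersection, so no parallel copy of $c$ can be disjoint from $d$; a noncrossing pair, by contrast, admits essential parallel copies $c',d'$ of $c$ and $d$ in the common link, which can be chosen disjoint from each other. The existence of such $\{c',d'\}$ should combinatorially pick out the noncrossing case.

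Next I would separate bigon pairs from pants pairs via the associated ``third curve'' $e=\overline{c\triangle d}$. For a pants pair $e$ is essential and hence itself a vertex of $\FC(S_g)$, with $\{c,d,e\}$ a triple of pairwise noncrossing single-interval pairs. For a bigon pair $e$ is inessential, so no such essential completion exists; a bigon pair is therefore exactly a noncrossing single-interval pair that does not sit inside an essential triple of this form. Finally, the nonseparating condition on a single vertex is combinatorially accessible: the induced subgraph on $N(c)$ reflects the fine curve graph of $S_g\setminus c$, and its connectedness in an appropriate sense picks out the nonseparating $c$.

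The hard part will be making precise the ``parallel copy''/``cobounds an annulus'' relation inside $\FC(S_g)$. In the classical curve graph, cobounding an annulus is equivalent to an identity of links $N(c)=N(c')$, but this naive analogue fails badly in $\FC(S_g)$: a curve running in the annular region on the $c'$ side of $c$ can be disjoint from $c$ and yet cross $c'$, so $N(c)\neq N(c')$ even when $c,c'$ are parallel. Extracting the right combinatorial surrogate for parallelism---perhaps by using a density property of many simultaneous parallel copies or by comparing links along a suitable chain of push-offs---is where I expect the bulk of the work to lie. Once such a surrogate is in hand, the remainder should reduce to careful case-checking of the possible intersection patterns of two essential curves on $S_g$.
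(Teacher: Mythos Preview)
Your proposal is a plan rather than a proof: you flag the characterization of parallelism as unresolved, and Step~1---picking out the single-interval pairs as those whose common link is ``as large as possible''---is never made precise. There is a concrete obstruction to Step~1 as you have framed it. A bigon pair bounds a disk (the region enclosed by $\overline{c\triangle d}$), so the hull of $\{c,d\}$ contains infinitely many essential curves, whereas for a torus or pants pair the hull is just $c\cup d$ and contains at most one extra vertex. Any link- or hull-based criterion will therefore separate bigon pairs \emph{from} torus and pants pairs rather than group them together, so the three-way bundle you propose is the wrong first cut.

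The paper's route is substantially different and avoids both difficulties. It never tries to recognize ``intersection is a single interval'' uniformly. Instead it first isolates torus pairs via hulls (Lemma~\ref{lem:torus}), then characterizes \emph{disjoint} homotopic nonseparating pairs combinatorially: they are exactly the nonseparating pairs forming a separating multicurve with all separating curves on one side. This yields annulus sets $\FC(a,b)$ together with an order on them preserved by automorphisms (Lemma~\ref{lem:annulus}), and resolves your parallelism problem in the disjoint case. The key device is then to probe a noncrossing annulus pair $\{c,d\}$ by curves $e$ forming \emph{degenerate} torus pairs with each of $c$ and $d$: such an $e$ is ``type~1'' if $c\cap e=d\cap e$ (a single point on $c\cap d$) and ``type~2'' otherwise (in which case $e$ threads a bigon of $\{c,d\}$). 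The number of bigons formed by $\{c,d\}$ is read off from the maximal alternating chain of type~1/type~2 probes in the annulus order, and bigon pairs are those forming exactly one bigon that additionally admit two disjoint type~1 probes, forcing $c\cap d$ to be a nondegenerate interval. The bigon count thus comes not from a static link comparison but from the torus-pair machinery already in hand.
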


\begin{figure}
\includegraphics[scale=.35]{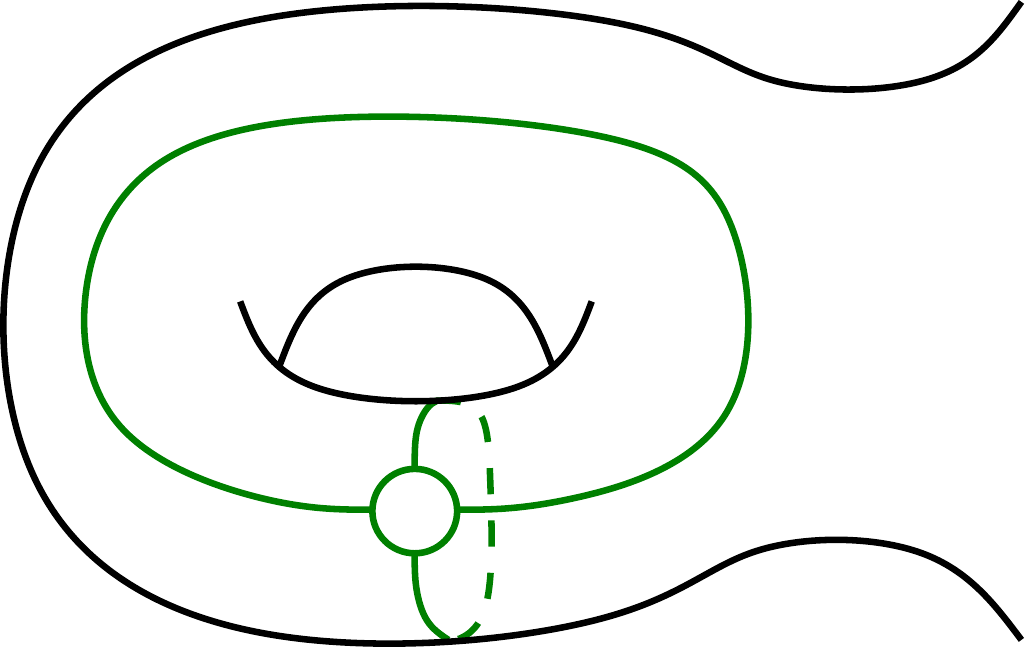}
\hspace*{.5in}
\includegraphics[scale=.35]{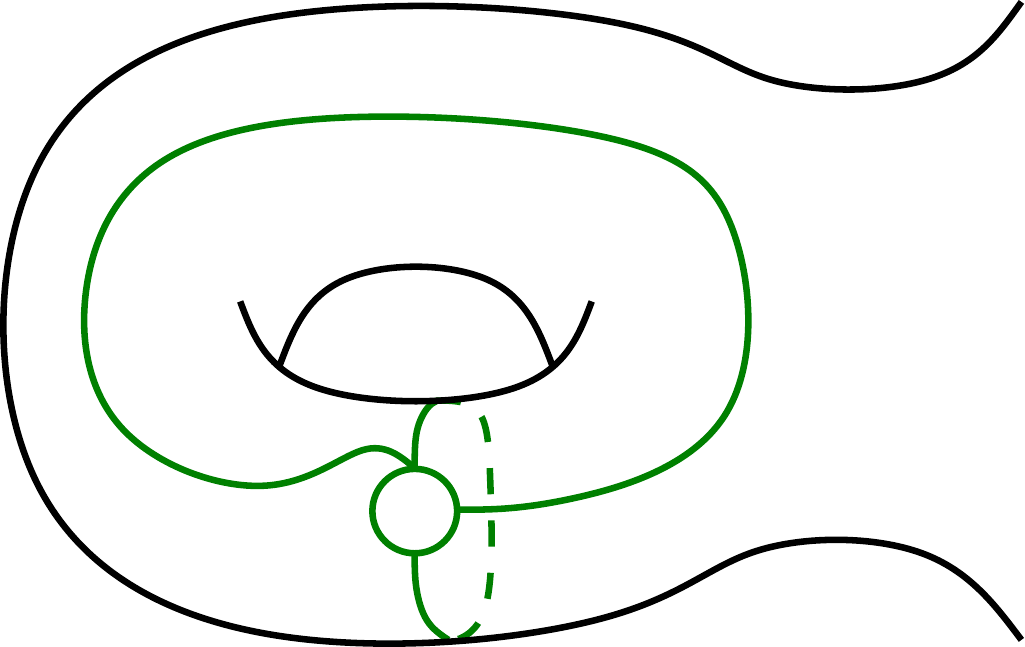}
\hspace*{.5in}
\includegraphics[scale=.35]{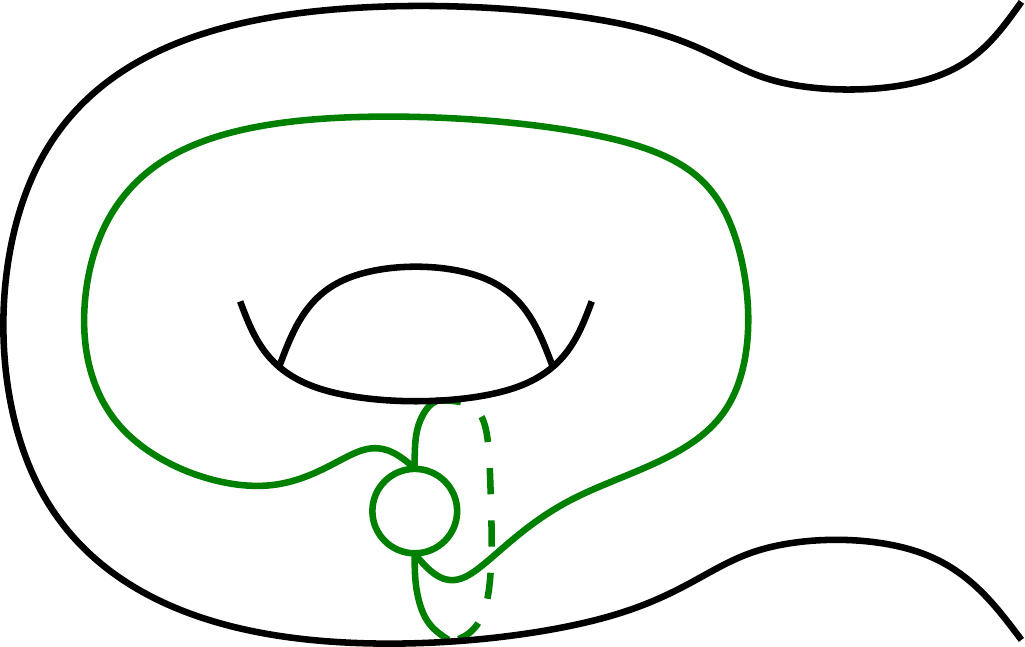}
\caption{The three types of sharing pairs}
\label{fig:sharing}
\end{figure}

For the second proposition, we require another definition.  Suppose that the bigon pairs $\{a,b\}$ and $\{a',b'\}$ determine the same inessential curve $e$.  In this case, each bigon pair gives rise to a single arc in the surface obtained by deleting the interior of the disk bounded by $e$; we identify this surface with $S_g^1$.  We say that the pair of bigon pairs $\{\{a,b\},\{a',b'\}\}$ is a \emph{sharing pair} if the corresponding arcs in $S_g^1$ have disjoint interiors.  We further say that the sharing pair is \emph{linked} if these two arcs are linked at $e$, which means that all boundary parallel curves in $S_g^1$ sufficiently close to the boundary intersect the two arcs alternately.  We note that if a sharing pair is linked, then all four of the corresponding curves must be nonseparating.  See Figure~\ref{fig:sharing} for pictures of sharing pairs; there are three configurations according to how many endpoints of the arcs agree at $e$.

\begin{proposition}
\label{prop:curve pair pairs}
Let $g \geq 2$.  Then every automorphism of $\FC(S_g)$ preserves the set of linked sharing pairs.
\end{proposition}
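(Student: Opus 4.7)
Given an automorphism $\alpha \in \Aut \FC(S_g)$ and a linked sharing pair $\{\{a,b\},\{a',b'\}\}$ with shared inessential curve $e$ and linked arcs $I, I'$ in the complementary one-holed surface $S_g^1$, the first move is to apply Proposition~\ref{prop:curve pairs} to conclude that both $\{\alpha(a),\alpha(b)\}$ and $\{\alpha(a'),\alpha(b')\}$ are nonseparating bigon pairs, and so each determines an inessential simple closed curve (call them $E$ and $E'$) together with a corresponding arc (call them $J, J'$) in the complement. The substantive content of the proposition is then to show that $E = E'$ and that $J, J'$ are linked at $E$.

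The plan to do this is to produce a characterization of linked sharing pairs purely in terms of adjacency in $\FC(S_g)$ together with the ``being a nonseparating bigon pair'' predicate (which, by Proposition~\ref{prop:curve pairs}, is itself invariant under $\alpha$). I would carry out this characterization by a case analysis corresponding to the three topological types of linked sharing pair pictured in Figure~\ref{fig:sharing}. In each case the analysis identifies: the intersection pattern among the six pairs drawn from $\{a,b,a',b'\}$; the structure of the set of common $\FC(S_g)$-neighbors of $\{a,b,a',b'\}$, which corresponds to the essential curves in $S_g \setminus (e \cup I \cup I')$; and sub-bigon-pairs hidden inside the configuration. For example, in the four-endpoint linked case, one observes that each of the mixed pairs $\{a,a'\}, \{a,b'\}, \{b,a'\}, \{b,b'\}$ has a single-interval intersection along which the curves cross, while in the non-linked sharing-pair analogue exactly one of these mixed pairs is disjoint and another has intersection with multiple components---a distinction read off from $\FC(S_g)$ via Proposition~\ref{prop:curve pairs} applied to subconfigurations.

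The forward direction---that every linked sharing pair satisfies the combinatorial condition---reduces to a direct geometric computation from the explicit picture in Figure~\ref{fig:sharing}, inspecting how the arcs $a\setminus I$, $b\setminus I$, $a'\setminus I'$, $b'\setminus I'$ partition $e$ and how $I$ and $I'$ sit in $S_g^1$. The hard part, which I expect to be the main obstacle, is the converse: given four essential curves $c_1, c_2, c_3, c_4$ in $S_g$ with $\{c_1,c_2\}$ and $\{c_3,c_4\}$ nonseparating bigon pairs and the prescribed combinatorial pattern, one must prove that the two bigon pairs determine the same inessential curve and that their associated arcs are linked. This will require ruling out phantom configurations that mimic the combinatorial pattern without genuinely sharing an inessential curve, and I anticipate needing to extract auxiliary curves in $\FC(S_g)$---for instance, essential curves in a neighborhood of the putative shared disk---that can only exist in the genuinely linked sharing configuration. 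Once both directions are established, $\alpha$ preserves the combinatorial data and, by Proposition~\ref{prop:curve pairs}, preserves the nonseparating-bigon-pair predicate, so $\{\{\alpha(a),\alpha(b)\},\{\alpha(a'),\alpha(b')\}\}$ is again a linked sharing pair.
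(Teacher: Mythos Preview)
Your proposal is a plan rather than a proof: you explicitly flag the converse direction as ``the hard part'' and ``the main obstacle,'' and you do not carry it out. As written, the argument has a genuine gap precisely where the content lies---you never show that four curves satisfying your combinatorial pattern must have coinciding inessential curves $E = E'$.

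More to the point, you are missing the device that lets the paper avoid your case analysis entirely. The paper does not split into the three sharing-pair types of Figure~\ref{fig:sharing}. Instead it invokes Lemma~\ref{lem:torus} (automorphisms preserve torus pairs and torus triples) and proves the following characterization: bigon pairs $\{c,d\}$ and $\{c',d'\}$ form a linked sharing pair if and only if (i) $\{c,d'\}$ and $\{c',d\}$ are nondegenerate torus pairs, and (ii) a single curve forms a torus triple with both. The forward direction is read off the picture. For the converse, condition (ii) gives $c \triangle d' \doteq c' \triangle d$ (equal up to finitely many points), and then a one-line symmetric-difference identity yields
\[
e \doteq c \triangle d = (c \triangle c') \triangle (c' \triangle d) \doteq (c \triangle c') \triangle (c \triangle d') = c' \triangle d' \doteq e',
\]
forcing $e = e'$. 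Condition (i) then rules out the arcs being unlinked or having intersecting interiors. This symmetric-difference trick is exactly what replaces your ``phantom configuration'' analysis.

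You were in fact close: your observation that the mixed pairs $\{a,a'\}, \{a,b'\}, \{b,a'\}, \{b,b'\}$ have single-interval crossing intersection is precisely the torus-pair condition. But you appeal to Proposition~\ref{prop:curve pairs} (bigon pairs) rather than Lemma~\ref{lem:torus} (torus pairs and triples), and without the torus-\emph{triple} condition (ii) you have no algebraic handle forcing $E = E'$. The common-neighbor and sub-bigon-pair data you propose to extract do not obviously pin down the inessential curve, which is why your converse stalls.
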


We prove Propositions~\ref{prop:curve pairs} and~\ref{prop:curve pair pairs} in Sections~\ref{sec:curve pairs} and~\ref{sec:curve pair pairs}, respectively.  Section~\ref{sec:transverse} contains a preliminary result, Lemma~\ref{lem:torus}, which states that automorphisms preserve torus pairs.

\p{Tame curves, wild pairs} Before we begin in earnest, we make some comments about the point-set topological issues that arise in this work.  First, it is a fact that every curve in a surface is tame in the sense that it is flat at each point.  It follows, for example, that any two nonseparating curves in a surface $S$ differ by a homeomorphism of $S$.  This can be thought of as a version of the change of coordinates principle in the theory of mapping class groups \cite[Section 1.3]{Primer}.

While curves themselves are tame, pairs of curves can exhibit complicated behavior.  If $a$ and $b$ are curves in a surface $S$, then $a \setminus b$ can be regarded as an open set in $S^1$, and hence is a countable union of disjoint intervals.  On the other hand, $a \cap b$ is a compact set, but it can be complicated.  The components of $a \cap b$ are (possibly degenerate) intervals, but there can be uncountably many; for instance $a \cap b$ can be a Cantor set.  

\subsection{Torus pairs}
\label{sec:transverse}

In this section we prove Lemma~\ref{lem:torus}, which states that automorphisms of $\FC(S_g)$ preserve the set of torus pairs, the set of (non)degenerate torus pairs, and the set of torus triples.  Along the way, we prove two auxiliary lemmas, Lemmas~\ref{lemma:joins} and~\ref{lem:hulls}.  In what follows, we say that two curves (or vertices of $\FC(S)$) \emph{intersect} if they are not disjoint.

\p{Sides} For the first lemma, a \emph{multicurve} is a finite collection of pairwise disjoint essential simple closed curves in $S$ (a curve is an example of a multicurve).  As such, multicurves are the same as finite cliques in $\FC(S)$.  A multicurve in $S$ is separating if its complement has more than one component.  We say that two curves $a$ and $b$ lie on the \emph{same side} of a separating multicurve $m$ if they are disjoint from $m$ and lie in the same complementary component.  

In the proof, we say that a graph is a \emph{join} if we can partition the set of vertices into two or more nonempty sets in such a way that every vertex from one set is connected by an edge to every vertex in the other sets.  Also, the \emph{link} of a set $A$ of vertices in a graph is the subgraph spanned by the set of vertices that are not in $A$ and are connected by an edge to each vertex in $A$.

\begin{lemma}
\label{lemma:joins}
Let $S = S_g$ with $g \geq 2$, and let $\alpha \in \Aut \FC(S)$.  Then $\alpha$ preserves the set of separating curves in $\FC(S)$ and also preserves the set of separating multicurves in $\FC(S)$.  Moreover, $\alpha$ preserves the sides of a separating multicurve, that is, $a$ and $b$ lie on the same side of $m$ if and only if $\alpha(a)$ and $\alpha(b)$ lie on the same side of $\alpha(m)$.
\end{lemma}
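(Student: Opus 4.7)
The plan is to give, for each of the three invariants named in the lemma, a characterization purely in terms of the adjacency relation of $\FC(S)$, from which preservation by $\alpha$ is automatic.

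For separating multicurves (the case of a single separating curve being a specialization), I would use a join characterization. A multicurve in $S$ is nothing more than a finite clique in $\FC(S)$, and $\alpha$ evidently preserves finite cliques. I claim that such a clique $m$ is separating if and only if $\Lk(m)$ is a join. When $S \setminus m$ has components $C_1, \ldots, C_k$ with $k \geq 2$, every essential curve in $\Lk(m)$ is contained in exactly one $C_i$ and curves lying in different $C_i$ are automatically disjoint, so $\Lk(m)$ splits as a nontrivial join of the subgraphs spanned by the curves in each $C_i$; each factor is nonempty because no $C_i$ can be a disk (the elements of $m$ are essential). Conversely, if $S \setminus m$ is connected, I would rule out a join structure by showing that for any two vertices $a, b \in \Lk(m)$ there is a third vertex $c \in \Lk(m)$ that intersects both of them (equivalently, is nonadjacent to both in $\FC(S)$); this forces the complement of $\Lk(m)$ to be connected. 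Since $g \geq 2$ forces $\chi(S \setminus m) = 2 - 2g \leq -2$, the connected subsurface $S \setminus m$ is rich enough in every case to support such a transverse curve $c$ by a direct construction.

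To characterize sides, I would show that two vertices $a, b \in \Lk(m)$ lie on the same side of a separating multicurve $m$ if and only if there exists $c \in \Lk(m)$ that is nonadjacent to both. The reverse direction is immediate: if $a$ and $b$ lie in different complementary components of $m$, then any $c \in \Lk(m)$ sits entirely in a single component and is therefore disjoint from (hence adjacent to) whichever of $a, b$ lies in the other component. The forward direction reduces to constructing, for any two essential curves $a, b$ in a single complementary component $C$, an essential curve $c \subset C$ meeting both of them; this is straightforward when $C$ has positive genus or many boundary components, and even when $C$ is a pair of pants one can draw a boundary-parallel representative in $C$ that crosses both $a$ and $b$ in the fine sense. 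Since both the class of separating multicurves and this side condition are phrased in purely adjacency-theoretic terms, $\alpha$ preserves sides as claimed.

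I expect the main obstacle to be the fine-topological construction of the witnessing curve $c$ in each characterization: unlike in the classical curve graph, one cannot pass to homotopy-class representatives, and pairs of curves in $S \setminus m$ can meet in wild subsets such as Cantor sets. The saving facts are that individual curves in a surface are tame (flat at every point) and that $g \geq 2$ leaves ample room in every complementary component to construct the needed $c$ by hand. The only genuinely delicate case is when some component of $S \setminus m$ has very small complexity, in particular a pair of pants, where all essential curves are boundary-parallel; here one needs to verify explicitly that two prescribed boundary-parallel curves can be crossed by a third boundary-parallel curve in the fine setting, which is the technical heart of the construction.
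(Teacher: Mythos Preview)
Your approach is essentially the same as the paper's: characterize separating multicurves by the join structure of their link, and recover sides from the resulting join decomposition, using in both places the fact that any two curves in a single connected complementary region are hit by a common curve in that region. The paper is terser---it simply asserts the existence of the common intersecting curve and that the join factors are ``uniquely defined,'' without the case analysis or the pair-of-pants discussion you sketch---but the strategy is identical.
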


\begin{proof}

First of all, it follows from the definition of $\FC(S)$ that $\alpha$ preserves multicurves and also preserves curves.  Therefore, for the first statement it suffices to distinguish the separating multicurves from the nonseparating ones.

We claim that a multicurve $m=\{c_1,\dots,c_k\}$ is separating if and only if the link of $m$ is a join.  Indeed if $m$ is a separating multicurve then the sets for the join decomposition are the curves that lie in the various complementary components of $m$ (each of these sets is nonempty because they contain curves parallel to the $c_i$).  For the other direction, we observe that if $m$ is a nonseparating multicurve, and $a$ and $b$ lie in the link of $m$, then there is a curve $d$ that intersects both $a$ and $b$.  It follows from this that the link of $m$ cannot be a join, as desired.

It follows from the argument in the previous paragraph that the two sets used to define the join decomposition for a separating multicurve are uniquely defined.  From this the second statement follows.
\end{proof}

\p{Hulls} We define the \emph{hull} of a collection of curves in a surface to be the union of the curves along with any embedded disks bounded by the curves. 

\begin{lemma}
\label{lem:hulls}
Let $S=S_g$ with $g \geq 2$, and let $\alpha \in \Aut \FC(S)$.  If $A$ is a finite set of vertices of $\FC(S)$ and a vertex $d$ lies in the hull of $A$, then $\alpha(d)$ lies in the hull of $\alpha(A)$.
\end{lemma}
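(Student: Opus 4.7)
The plan is to give a purely combinatorial characterization of the containment ``$d$ lies in the hull of $A$'' that depends only on the edge relation of $\FC(S)$, so that the conclusion follows automatically from $\alpha$ being a graph automorphism. Concretely, I will prove the following equivalence: a vertex $d$ of $\FC(S)$ lies in the hull of $A$ if and only if every vertex $e$ of $\FC(S)$ that is disjoint from each curve in $A$ is also disjoint from $d$. Since disjointness is exactly the edge relation in $\FC(S)$, this condition is preserved when $\alpha$ is applied simultaneously to $d$ and to the curves in $A$, which yields the lemma.

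The forward direction of the characterization is short. If $e$ is a vertex disjoint from each curve in $A$, then $e$ lies in a single component $C$ of $S \setminus \bigcup A$. The component $C$ cannot be a disk: otherwise $e$ would be a simple closed curve in a disk of $S$ and would bound a subdisk in $S$ by the Jordan curve theorem, contradicting the essentialness of $e$. Hence $C$ is a non-disk component, is not filled in when forming the hull of $A$, and so is disjoint from the hull; in particular $e$ and $d$ are disjoint.

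For the reverse direction, assume $d$ is not contained in the hull of $A$, and pick a point $p \in d$ lying in some non-disk component $C$ of $S \setminus \bigcup A$. I aim to produce a vertex $e$ of $\FC(S)$ that is contained in $C$ and passes through $p$; then $e$ is disjoint from every curve in $A$ but meets $d$ at $p$, contradicting the hypothesis. For this I start from any essential simple closed curve $e_0$ in $C$, which exists because $C$ is a connected non-disk surface, and use the transitivity of the homeomorphism group of $C$ on interior points to ambient-isotope $e_0$ within $C$ to a simple closed curve $e$ passing through $p$. It remains to check that $e$, which is essential in $C$, is also essential in $S$: if $e$ bounded a disk $D$ in $S$, then each curve $a \in A$ would be disjoint from $\partial D = e$ and hence either inside or outside $D$, with the former ruled out by essentialness of $a$; thus $D$ would lie in $S \setminus \bigcup A$, and since $\partial D \subseteq C$ the connected set $D$ would lie in $C$, contradicting essentialness of $e$ in $C$. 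The main obstacle is this converse direction, specifically producing a simple closed curve in $C$ through the prescribed point $p$ that remains essential after embedding into the ambient surface; everything else reduces to unpacking the definition of the hull and to the fact that $\alpha$ preserves adjacency.
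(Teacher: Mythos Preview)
Your proof is correct and follows essentially the same approach as the paper: both establish the combinatorial characterization that $d$ lies in the hull of $A$ if and only if the link of $A$ is contained in the link of $d$, from which the lemma follows immediately since automorphisms preserve links. Your reverse direction is somewhat more explicit than the paper's---you pick a point $p$, use transitivity of $\Homeo(C)$ to push an essential curve through $p$, and then verify that essentialness in $C$ implies essentialness in $S$---whereas the paper simply asserts that a non-disk complementary region contains curves essential in $S$ meeting the given arc of $d$; but the underlying argument is the same.
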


\begin{proof}

It suffices to prove that $d$ lies in the hull of $A$ if and only if the link of $d$ contains the link of $A$.  To this end, suppose that $d$ is a vertex of $\FC(S)$ that does not lie in the hull of $A$.  This means that there is a component of $d \setminus A$ that lies in a component $R$ of $S \setminus A$ that is not a disk.  Because $R$ is not a disk, it contains simple closed curves that are essential in $S$, and in particular it contains one that intersects the arc of $d$ in $R$, as desired.

Suppose on the other hand that $d$ is a vertex of $\FC(S)$ that lies in the hull of $A$.  Suppose also that $e$ is a simple closed curve in $S$ that intersects $d$ but not $A$.  Since $e$ is disjoint from $A$ it must lie in the complement of $A$.  And since $d$ is contained in the hull of $A$ it must then be that $e$ lies in one of the components of $S \setminus A$ that is a disk.  It follows that $e$ is inessential, and the lemma is proven.
\end{proof}

The statement of Lemma~\ref{lem:hulls} is specifically geared towards closed surfaces.  For surfaces with punctures we would, among other things, need to define the hull to include all once-punctured disks.  

\p{Torus pairs} We now prove the main result of this subsection.  

\begin{lemma}
\label{lem:torus}
Let $S = S_g$ with $g \geq 2$, and let $\alpha \in \Aut \FC(S)$.  Then $\alpha$ preserves the set of torus pairs, the set of degenerate torus pairs, the set of nondegenerate torus pairs, and the set of torus triples.  
\end{lemma}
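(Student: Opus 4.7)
The plan is to characterize torus pairs combinatorially using the hull structure of Lemma \ref{lem:hulls} and the separating-curve structure of Lemma \ref{lemma:joins}, so that preservation under $\alpha$ becomes automatic. The key observation is that $\{c,d\}$ is a torus pair---degenerate or not---precisely when there exists a separating simple closed curve $\gamma$ disjoint from $c \cup d$ such that $c$ and $d$ lie on the ``one-holed torus side'' of $\gamma$ and $c \cup d$ occupies the handle nontrivially. Combinatorially I would phrase this as follows: $c$ and $d$ intersect, and there is a separating simple closed curve $\gamma$ disjoint from $c \cup d$, with $c$ and $d$ on one side of $\gamma$, such that (a) every essential separating simple closed curve on that side is parallel to $\gamma$, and (b) every nonseparating essential simple closed curve on that side intersects $c$ or $d$. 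Condition (a) detects that the side is a one-holed torus: in a one-holed torus every essential separating curve is boundary-parallel, while for a pants pair an Euler characteristic computation using $g \geq 2$ shows that any separating curve enclosing the triple bounds a surface of genus at least $2$, which always admits non-boundary-parallel separating curves. Condition (b) rules out bigon pairs lying inside a one-holed torus: in that case the two homotopic curves admit nonseparating parallel copies disjoint from both (pushed to the side of $c$ away from the bigon), whereas in a torus pair the complement of $c \cup d$ inside the handle is an annulus with no nonseparating essential curves.

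By Lemma \ref{lemma:joins}, separating and nonseparating curves, and the sides of a separating multicurve, are all $\alpha$-invariant, so this characterization is preserved by $\alpha$. To distinguish nondegenerate from degenerate torus pairs, I would use the hull: the hull of a nondegenerate torus pair is a theta graph containing three essential simple closed curves ($c$, $d$, and $e = \overline{c \triangle d}$), while the hull of a degenerate torus pair is a wedge of two circles containing only the two essential curves $c$ and $d$. By Lemma \ref{lem:hulls}, the number of essential simple closed curves in the hull of $\{c,d\}$ is an invariant of $\alpha$, so the nondegenerate and degenerate subsets of torus pairs are preserved separately. A torus triple is then recognized as a nondegenerate torus pair $\{c,d\}$ together with the unique third essential curve in the hull of $\{c,d\}$, and its preservation follows immediately from Lemma \ref{lem:hulls} together with the preservation of nondegenerate torus pairs.

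The main obstacle is formalizing the notion of ``parallel to $\gamma$'' in condition (a) purely combinatorially. Two disjoint separating simple closed curves cobound an annulus if and only if they form a separating multicurve whose induced join decomposition has a ``trivial'' middle side---one containing only essential simple closed curves in a single parallel class of $\gamma$---a condition that can be detected inductively via the join structure provided by Lemma \ref{lemma:joins}. Carefully verifying these combinatorial equivalences, and handling low-genus edge cases (especially $g=2$, where the complement of a pants neighborhood in $S_g$ can itself be a pair of pants and additional care is needed to show that no single separating curve gives a one-holed-torus side), constitutes the bulk of the technical work.
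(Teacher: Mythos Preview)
Your characterization via conditions (a) and (b) does not actually pin down torus pairs: it is satisfied by many pairs $\{c,d\}$ whose intersection $c\cap d$ has more than one component.  For a concrete counterexample, take a one-holed torus $T\subset S_g$ with boundary $\gamma$, let $c$ be a $(1,0)$-curve and $d$ a $(0,1)$-curve perturbed by a single finger move across $c$, so that $c\cap d$ consists of three transverse points (one bigon).  Then (a) holds because $T$ is a one-holed torus.  For (b), count regions: in the closed torus the graph $c\cup d$ has $V=3$, $E=6$, hence $F=3$, all disks; removing the boundary disk turns one of them into an annulus with core parallel to $\gamma$.  Thus every essential curve in $T$ disjoint from $c\cup d$ is separating, so (b) holds.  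But $\{c,d\}$ is not a torus pair.  More generally, any ``filling'' pair of non-homotopic curves in $T$ will satisfy (a) and (b), regardless of the structure of $c\cap d$.  Your argument only rules out bigon pairs inside $T$; it does not address these other configurations.

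The paper sidesteps this by first using the hull (Lemma~\ref{lem:hulls}) to characterize the union of torus pairs and pants pairs: these are exactly the intersecting pairs $\{c,d\}$ whose hull contains at most one other vertex of $\FC(S)$.  This condition already forces $c\cap d$ to be a single interval (otherwise one finds either a disk region, yielding infinitely many curves in the hull, or at least four curves in $c\cup d$).  Only after that does the paper invoke your condition~(b) to separate torus pairs from pants pairs.  Your treatment of degenerate versus nondegenerate and of torus triples via the hull is correct and matches the paper; the missing ingredient is precisely this preliminary hull step.
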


\begin{proof}

We proceed in four steps.  First we show that $\alpha$ preserves the union of the torus pairs and the pants pairs.  Then we show that $\alpha$ preserves the set of torus pairs.  Next, we show that $\alpha$ preserves the set of degenerate torus pairs, hence it also preserves the nondegenerate torus pairs.  Finally, we prove that $\alpha$ preserves the set of torus triples.

\medskip

\noindent \emph{Step 1.} For the first step, it suffices to show that the following three statements are equivalent for a pair of intersecting vertices $\{c,d\}$ of $\FC(S)$:
\begin{enumerate}
\item The pair $\{c,d\}$ is a torus pair or a pants pair.
\item There is at most one other vertex of $\FC(S)$ in the hull of $\{c,d\}$.
\item There is at most one other vertex of $\FC(S)$ whose link contains the link of $\{c,d\}$.   
\end{enumerate}
The second and third statements are equivalent by Lemma~\ref{lem:hulls}, so it suffices to prove the equivalence of the first two statements.  

The first statement implies the second because if $\{c,d\}$ is a torus pair or pants pair, then the hull of $\{c,d\}$ is $c \cup d$, and in this case there are either no other simple curves or one other simple curve in $c \cup d$, depending on whether or not $\{c,d\}$ is degenerate.

We prove that the second statement implies the first by considering two cases, according to whether or not $c$ and $d$ bound any disks (meaning that some complementary region of $c\cup d$ is a disk).  If $c$ and $d$ bound a disk, then there are infinitely many other vertices of $\FC(S_g)$ that lie in the hull of $\{c,d\}$; indeed, we choose any curve that agrees with with $c$ away from this disk, and disagrees with $c$ inside the given disk.  

Assume now that $c$ and $d$ bound no disks.  If $\{c,d\}$ is not a torus pair or a pants pair, then it must be that $c \cap d$ has more than one connected component.  Let $a_1$ and $a_2$ be two such components.  Let $c_1$, $c_2$, $d_1$, and $d_2$ be the closures of the complementary components of $a_1 \cup a_2$ in $c$ and $d$.  In $c \cup d$ there are four distinct simple closed curves $e_1,\dots,e_4$ that contain $c_1 \cup d_1$, $c_1 \cup d_2$, $c_2 \cup d_1$, and $c_2 \cup d_2$, respectively.  These curves intersect each $a_i$ in the empty set, an endpoint, or all of $a_i$.  The $e_i$ are all distinct from $c$ and $d$.  If some $e_i$ were inessential, then it would be the boundary of a disk in $S$.  It would follow that $c \cup d$ bounds a (possibly smaller) disk, a contradiction.

\medskip

\noindent \emph{Step 2.} For the second step, we assume that $\{c,d\}$ is a torus pair or pants pair.  We will show that, under this assumption, the pair $\{c,d\}$ is a torus pair if and only if there is a separating curve $e$ disjoint from $c$ and $d$ and with the following property: all nonseparating simple closed curves in $S_g$ lying on the same side of $e$ as $\{c,d\}$ fail to be disjoint from $c \cup d$.  The proposition then follows from the definition of $\FC(S_g)$ and Lemma~\ref{lemma:joins}.

We begin with the forward direction.  Let $\{c,d\}$ be a torus pair, let $R$ be a neighborhood of $c \cup d$ homeomorphic to a torus with one boundary component $e$.  The surface obtained by cutting $R$ along $c \cup d$ is an annulus.  Any nonseparating curve in $S_g$ that lies in $R$ is not parallel to the boundary (otherwise it is parallel to $e$, hence separating), and hence this nonseparating curve intersects either $c$ or $d$, as desired.

For the reverse direction, we assume that $\{c,d\}$ is a pants pair, and we let $e$ be any separating curve disjoint from $c \cup d$.  There is a closed neighborhood of $c \cup d$ that is a pair of pants, and where each of the three components of the boundary of this subsurface is a nonseparating curve in $S_g$ that is disjoint from $c \cup d$ and lies on the same side of $e$ as $\{c,d\}$.  This completes the proof of the second step.

\medskip

\noindent \emph{Step 3.} The following three statements are equivalent for a torus pair $\{c,d\}$ of $\FC(S)$:
\begin{enumerate}
\item The torus pair $\{c,d\}$ is nondegenerate.
\item There is exactly one other vertex of $\FC(S)$ in the hull of $\{c,d\}$.
\item There is exactly one other vertex of $\FC(S)$ whose link contains the link of $\{c,d\}$.   
\end{enumerate}
The equivalence of the first two statements can be proved by inspection of the two possible configurations for a torus pair (degenerate and nondegenerate).  The last two statements are equivalent by Lemma~\ref{lem:hulls}.  This  completes the third step.

\medskip

\noindent \emph{Step 4.} Suppose $\{c,d,e\}$ is a torus triple.  Then $e$ is the unique vertex (other than $c$ and $d$) contained in the hull of $\{c,d\}$.  By Lemma~\ref{lem:hulls}, $e$ is the unique curve whose link contains the link of $\{c,d\}$.  Since torus pairs are preserved, it now follows that torus triples are preserved.
\end{proof}


\subsection{Bigon pairs}
\label{sec:curve pairs}

In this subsection we prove Proposition~\ref{prop:curve pairs}.  We begin by defining annulus sets and describing their basic properties.  We prove in Lemma~\ref{lem:annulus} that these properties are preserved under automorphisms of $\FC(S)$.  With that in hand, we proceed to the proof of Proposition~\ref{prop:curve pairs}.

\p{Annulus sets} Suppose that $(a,b)$ is an ordered pair of disjoint, homotopic vertices of $\FC(S_g)$.  Assuming $g \geq 2$, there is a unique annulus $A$ in $S_g$ whose boundary is $a \cup b$.  Let $\FC(a,b)$ be the set of vertices of $\FC(S_g)$ contained in $A$.  We refer to $\FC(a,b)$ as an \emph{annulus set}.  We say that a pair of vertices of $\FC(S_g)$ is an \emph{annulus pair} if they lie in some $\FC(A)$.  A \emph{nonseparating noncrossing annulus pair} is an annulus pair where both curves are nonseparating and the pair is noncrossing.

There is a natural ordering on the annulus set $\FC(a,b)$: we say that $c \preceq d$ if $c$ and $d$ are noncrossing and each component of $c \setminus d$ lies in a component of $A$ bounded by $a$.

\begin{lemma}
\label{lem:annulus}
Let $S = S_g$ with $g \geq 2$, let $\alpha$ be an automorphism of $\FC(S_g)$, let $a$ and $b$ be disjoint, homotopic nonseparating curves.
\begin{enumerate}[itemsep=.15em]
\item\label{item:pairs} The curves $\alpha(a)$ and $\alpha(b)$ are disjoint, homotopic nonseparating curves.
\item\label{item:annuli} The image of $\FC(a,b)$ under $\alpha$ is $\FC(\alpha(a),\alpha(b))$.
\item\label{item:noncross} If $c,d \in \FC(a,b)$ are noncrossing then $\alpha(c)$ and $\alpha(d)$ are noncrossing.
\item\label{item:order} If $c \preceq d$ in $\FC(a,b)$ then $\alpha(c) \preceq \alpha(d)$ in $\FC(\alpha(a),\alpha(b))$.
\end{enumerate}
\end{lemma}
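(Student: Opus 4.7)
The plan is to verify the four items in turn, in each case by finding a purely combinatorial characterization in $\FC(S_g)$ that can be transported through $\alpha$ using Lemmas~\ref{lemma:joins} and~\ref{lem:torus}.

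For item~\ref{item:pairs}, I will use that disjointness is an edge of $\FC(S_g)$ and hence preserved by $\alpha$, and that by Lemma~\ref{lemma:joins}, both the property of being individually nonseparating and the property that $\{a,b\}$ is a separating multicurve are preserved, together with the two sides of $\{a,b\}$. To detect the homotopy class, I would prove that such a separating pair of nonseparating curves $\{a,b\}$ has $a$ homotopic to $b$ if and only if one of the two complementary sides of $\{a,b\}$ contains no torus pair of $\FC(S_g)$. The argument uses a count on Euler characteristic and boundary components: each complementary component of $\{a,b\}$ must be bounded by $a \cup b$, with the two components having total genus $g-1$; a genus-zero component is an annulus, which contains no torus pair because all its essential curves are mutually homotopic, while any positive-genus component with two boundary circles contains a one-holed torus subsurface and hence a torus pair. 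Lemma~\ref{lem:torus} then transports this characterization through $\alpha$.

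For item~\ref{item:annuli}, let $B$ denote the non-annulus side of $\{a,b\}$, which is combinatorially determined by item~\ref{item:pairs}. I plan to characterize $\FC(a,b)$ by: $c \in \FC(a,b)$ if and only if $c$ is disjoint from every vertex of $\FC(S_g)$ contained in $B$. The forward implication is immediate since $A$ and $B$ meet only along $a \cup b$. The reverse requires showing that if $c \not\subseteq A$ then some vertex of $\FC(S_g)$ in $B$ meets $c$; here I would take a subarc of $c$ in the interior of $B$ and exhibit an essential simple closed curve in $B$ crossing it, which is possible because $B$ has genus at least one with two boundary circles and hence an abundance of essential curves transverse to any chosen arc. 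Lemma~\ref{lemma:joins} preserves sides, so this characterization passes through $\alpha$.

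For items~\ref{item:noncross} and~\ref{item:order}, I would first dispatch the disjoint subcase and then bootstrap to the general noncrossing case. For disjoint distinct $c, d \in \FC(a,b)$ with $d$ disjoint from $a$, the pair $\{a,d\}$ is itself disjoint, homotopic, and nonseparating, so by items~\ref{item:pairs} and~\ref{item:annuli} the annulus set $\FC(a,d)$ is preserved by $\alpha$; the disjoint order is then captured by $c \prec d$ if and only if $c \in \FC(a,d)$. For the general noncrossing case, the plan is to show that $c, d \in \FC(a,b)$ are noncrossing if and only if there is an $e \in \FC(a,b) \setminus \{c,d\}$ disjoint from both. The forward direction will be a push-off to the side of $c$ not containing $d$, possible precisely because of noncrossing; the reverse direction will use that any arc of $d$ transverse to $c$ is essential in the corresponding sub-annulus of $A$ and so meets every parallel translate of $c$ on that side. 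The order in the noncrossing case will then be characterized by the containment $\FC(a,c) \subseteq \FC(a,d)$ on disjoint annular subsets, inheriting preservation from the disjoint case together with item~\ref{item:annuli}. The main obstacle I anticipate is in these last two items: the intersection $c \cap d$ can be pathological (for example a Cantor set) even when $c$ and $d$ are individually tame, so globalizing the push-off and verifying that no common disjoint neighbor exists when $c$ and $d$ cross requires careful handling of wild intersection patterns, and the boundary-touching cases (where $c$ or $d$ meets $\partial A = a \cup b$) need separate treatment before $\FC(a,c)$ or $\FC(a,d)$ is literally defined as an annulus set.
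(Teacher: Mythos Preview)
Your characterizations for items~\ref{item:pairs} and~\ref{item:annuli} differ from the paper's but are correct: you detect the annulus side by the absence of torus pairs (invoking Lemma~\ref{lem:torus}), whereas the paper uses only Lemma~\ref{lemma:joins} and the observation that all separating curves disjoint from $a\cup b$ lie on the non-annulus side. Both routes work; your Euler-characteristic count showing each complementary component has exactly one $a$-boundary and one $b$-boundary is what makes the torus-pair criterion go through.

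There is, however, a genuine gap in item~\ref{item:noncross}. Your proposed characterization---that $c,d\in\FC(a,b)$ are noncrossing if and only if some $e\in\FC(a,b)\setminus\{c,d\}$ is disjoint from both---fails in the reverse direction. In $A=S^1\times[0,1]$ take $c=S^1\times\{\tfrac12\}$ and let $d$ be the graph of a continuous map $S^1\to[\tfrac14,\tfrac34]$ crossing height $\tfrac12$ transversally; then $c$ and $d$ cross, yet $e=S^1\times\{\tfrac15\}$ is disjoint from both. Your sketch of the reverse direction only rules out curves $e$ that are parallel translates of $c$ on a given side, not arbitrary curves near $\partial A$. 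The paper's characterization is different in kind: $c,d$ are noncrossing if and only if there exists $e\in\FC(a,b)$, distinct from $c$ and $d$, such that every $f\in\FC(a,b)$ meeting both $c$ and $d$ must also meet $e$; the witness $e$ contains $c\cap d$ and threads through each bigon. For item~\ref{item:order} the paper likewise gives a direct characterization---$c\preceq d$ if and only if some $f\in\FC(a,b)$ meets $a$ and $c$ but not $d$---which avoids both the dependence on your flawed item~\ref{item:noncross} argument and the boundary-touching issues you correctly flag with $\FC(a,c)$.
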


\begin{proof}

We prove the four statements in turn.  The first statement is a consequence of Lemma~\ref{lemma:joins} and the fact that 
two disjoint nonseparating curves $a$ and $b$ in $S_g$ are homotopic if and only if the following conditions hold: $a$ and $b$ form a separating multicurve and all separating curves disjoint from both $a$ and $b$ lie on the same side of the multicurve $a \cup b$.

The second statement is an immediate consequence of the first statement and Lemmas~\ref{lemma:joins}.

We proceed to the third statement.  By the first statement, we may assume that $\alpha$ preserves $a$ and $b$ (that is, we may postcompose $\alpha$ with an automorphism induced by an element of $\Homeo(S)$ to make this so).  By the second statement, $\alpha$ preserves the annulus set $\FC(a,b)$.  Two curves $c$ and $d$ are noncrossing if and only if there is a different curve $e \in \FC(a,b)$ with the property that every curve in $\FC(a,b)$ that intersects $c$ and $d$ must also intersect $e$ (when $c$ and $d$ are noncrossing this curve $e$ contains $c \cap d$ and passes through the interior of each bigon formed by $a$ and $b$).  The third statement follows now from the previous two.  

The fourth statement holds by the previous three statements and the fact that noncrossing curves $c,d \in \FC(a,b)$ satisfy $c \preceq d$ if and only if there is an element of $\FC(a,b)$ that intersects $a$ and $c$ but not $d$.  This completes the proof of the lemma.
\end{proof}

\p{Type 1 and type 2 curves} Suppose that $\{c,d\}$ is a nonseparating noncrossing annulus pair, and suppose that $e$ is a curve so that $\{c,e\}$ and $\{d,e\}$ are degenerate torus pairs.  If $c \cap e$ and $d \cap e$ are the same point, then we say that $e$ is a \emph{type 1 curve} for $\{c,d\}$.  Otherwise we say that $e$ is a \emph{type 2 curve} for $\{c,d\}$.

\begin{lemma}
\label{lem:type12}
Let $S = S_g$ with $g \geq 2$, and let $\alpha \in \Aut \FC(S)$.  Then $\alpha$ preserves type 1 and type 2 curves for nonseparating noncrossing annulus pairs.  More precisely, if $\{c,d\}$ is a nonseparating noncrossing annulus pair and $e$ is a type 1 curve for $\{c,d\}$, then $\alpha(e)$ is a type 1 curve for the nonseparating noncrossing annulus pair $\{\alpha(c),\alpha(d)\}$, and similarly for type 2 curves.
\end{lemma}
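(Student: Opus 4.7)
The plan is to proceed in two stages. First use the preservation results already established to reduce to a combinatorial dichotomy; then identify a combinatorial property that distinguishes the two types.

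For the first stage, by Lemma~\ref{lem:annulus} parts (1) and (3), the image pair $\{\alpha(c), \alpha(d)\}$ is again a nonseparating noncrossing annulus pair. By Lemma~\ref{lem:torus}, both $\{\alpha(c), \alpha(e)\}$ and $\{\alpha(d), \alpha(e)\}$ remain degenerate torus pairs. Hence $\alpha(e)$ is automatically either a type~1 or a type~2 curve for $\{\alpha(c), \alpha(d)\}$, and the only remaining issue is to show that $\alpha$ preserves the dichotomy.

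For the second stage, the underlying geometric distinction is that in type~1 the three curves $c, d, e$ share a common triple intersection point $p$, whereas in type~2 the curve $e$ meets $c \cup d$ at two distinct double points $p_1, p_2$ connected by a subarc of $e$ lying in a complementary region of $c \cup d$. I plan to detect this via an auxiliary curve test: roughly, $e$ is type~2 for $\{c,d\}$ if and only if there is a vertex $f$ of $\FC(S_g)$ with $f$ disjoint from $c \cup d$ and $\{e, f\}$ in some prescribed configuration (e.g., a degenerate torus pair). In type~2, the subarc of $e$ between $p_1$ and $p_2$ lies in a single complementary region of $c \cup d$, and an essential simple closed curve $f$ crossing this subarc exactly once can be constructed (using $g \geq 2$ for room---for instance, when $c$ and $d$ are disjoint, one can take $f$ to be a core curve of the subannulus they cobound). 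In type~1, the analogous construction should fail because $e \setminus \{p\}$ is an arc based at a single boundary point of $S_g \setminus (c \cup d)$, making it difficult to produce a curve with the required single transverse intersection with $e$.

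The main obstacle will be to pin down the exact form of the auxiliary condition so it captures precisely the type~2 situation, and to verify both directions rigorously. The difficulty is twofold: the intersection $c \cap d$ can have complicated topology (an arbitrary compact subset of $c$, potentially including intervals and Cantor-like pieces), and the complementary regions of $c \cup d$ vary in number and topology depending on this structure. The annulus pair hypothesis is essential here, since Lemma~\ref{lem:annulus} forces $c$ and $d$ to be parallel in a common annulus, which tightly restricts how they can sit inside $S_g$ and therefore constrains the topology of the complementary regions. Once the characterization is verified, automorphism invariance is immediate: applying $\alpha$ to the witness $f$ yields a curve $\alpha(f)$ disjoint from $\alpha(c), \alpha(d)$ that still forms the prescribed configuration with $\alpha(e)$, by Lemmas~\ref{lem:annulus} and \ref{lem:torus}, so $\alpha(e)$ is type~2 exactly when $e$ is.
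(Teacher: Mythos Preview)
Your first stage is exactly what the paper does: Lemmas~\ref{lem:annulus} and~\ref{lem:torus} reduce the problem to distinguishing type~1 from type~2 combinatorially. The gap is in the second stage. The specific auxiliary condition you float---a vertex $f$ disjoint from $c\cup d$ with $\{e,f\}$ a degenerate torus pair---does not separate the two types. In both type~1 and type~2 the curve $e$ has a long arc that leaves the annular region carrying $c$ and $d$ and runs over a handle of $S_g$ (this is forced by $\{c,e\}$ being a torus pair). A curve $f$ living on that handle, disjoint from $c\cup d$ and meeting $e$ in a single transverse point, exists in either case. Your own heuristic for why type~1 should fail (``$e\setminus\{p\}$ is an arc based at a single boundary point'') is not actually an obstruction to finding such an $f$, since that arc still crosses the handle.

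The paper's characterization is structurally different and avoids this problem. After fixing an ambient annulus set $\FC(a,b)$ containing $c$ and $d$, it declares $e$ to be type~2 if and only if there exists a vertex $f\neq e$ contained in the \emph{hull} of $\{c,d,e\}$ but not in $\FC(a,b)$. In type~2 the arc of $e$ between $c\cap e$ and $d\cap e$ lies in a bigon of $c\cup d$, and re-routing that arc inside the bigon manufactures $f$. In type~1 the only portion of the hull that escapes the annulus is $e$ itself (a single arc with both ends at the common point $p$), so any such $f$ would have to contain all of $e$ and hence equal it. Invariance then follows from Lemma~\ref{lem:hulls} and Lemma~\ref{lem:annulus}. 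The key conceptual difference from your proposal is that the paper looks for $f$ \emph{inside} the hull of $\{c,d,e\}$ rather than disjoint from $c\cup d$; this is what pins the witness to the bigon and makes the dichotomy sharp.
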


\begin{proof}

Since $\alpha$ preserves degenerate torus pairs (Lemma~\ref{lem:torus}), we may assume that $\alpha$ preserves the union of the type 1 and type 2 curves for $\{c,d\}$.  So it remains to show that $\alpha$ preserves the two types.

Say that $c,d \in \FC(a,b)$.  By parts \eqref{item:pairs} and \eqref{item:annuli} of Lemma~\ref{lem:annulus}, we may assume without loss of generality that $\alpha$ preserves $\FC(a,b)$ (as in the proof of Lemma~\ref{lem:annulus} we may postcompose $\alpha$ with an automorphism induced by an element of $\Homeo(S)$ to make this so).

Let $e$ be a curve with the property that $\{c,e\}$ and $\{d,e\}$ are degenerate torus pairs, so $e$ is either a type 1 or type 2 curve for $\{c,d\}$.  We claim that $e$ is a type 2 curve if and only if there is a curve $f$ with the following properties:
\begin{itemize}[itemsep=.15em]
\item $f$ is contained in the the hull of $\{c,d,e\}$,
\item $f$ is not contained in $\FC(a,b)$, and
\item $f$ is not equal to $e$.
\end{itemize}
The forward direction is proved by construction, as follows.  If $e$ is a type 2 curve, it passes through the interior of a bigon $B$ bounded by arcs of $c$ and $d$.  By replacing the arc of $e$ that passes through the bigon with a different arc, we obtain the desired curve $f$.  For the other direction, we suppose that $e$ is a type 1 curve for $\{c,d\}$.  Any curve that satisfies the first two given properties would have to contain all of $e$, hence would fail the third property.  The claim follows and the lemma thus follows from Lemma~\ref{lem:hulls}.
\end{proof}

\p{Bigon pairs} We are now ready for the proof of Proposition~\ref{prop:curve pairs}.

\begin{proof}[Proof of Proposition~\ref{prop:curve pairs}]

By Lemma~\ref{lem:annulus}\eqref{item:noncross}, we have that $\alpha$ preserves nonseparating noncrossing annulus pairs.  If $\{c,d\}$ is such a pair, then $c \cup d$ is a union of inessential curves with (possibly degenerate) arcs connecting these inessential curves cyclically.  If there are $k$ (or more) inessential curves, then we may find curves $e_1,f_1,\dots,e_k,f_k$ with the following properties:
\begin{itemize}
\item any two of the $2k$ curves are homotopic and pairwise disjoint,
\item each $e_i$ is a type 1 curve for $\{c,d\}$,
\item each $f_i$ is a type 2 curve for $\{c,d\}$,
\item and the $2k$ curves lie in the given order in the annulus bounded by $e_1$ and $f_k$.  
\end{itemize}
Conversely, if we can find $2k$ curves with the above properties, then $c$ and $d$ form $k$ inessential curves.  Thus, by Lemmas~\ref{lem:annulus} and~\ref{lem:type12}, $\alpha$ preserves the number of inessential curves formed by $\{c,d\}$.  In particular, it preserves the set of noncrossing annulus pairs that form exactly one inessential curve.

A bigon pair is a nonseparating noncrossing annulus pair $\{c,d\}$ that forms exactly one inessential curve and has the additional property that $c \cap d$ is a nondegenerate interval.  Among the nonseparating noncrossing annulus pairs forming exactly one inessential curve, the bigon pairs are exactly those for which there exists two curves $e_1$ and $e_2$ that are disjoint and are both type 1 curves for $\{c,d\}$.  The proposition follows.
\end{proof}


\subsection{Sharing pairs}
\label{sec:curve pair pairs}

The goal of this subsection is to prove Proposition~\ref{prop:curve pair pairs}, which states that automorphisms of $\FC(S_g)$ preserve sharing pairs.  

In the following proof we write $A \doteq B$ if $A$ and $B$ are two sets with $A \triangle B$ is a finite set.  The relation $\doteq$ is an equivalence relation. 

If $\{c,d,e\}$ is a torus triple, then we have, for example, that $e \doteq c \triangle d$.  Similarly, if $\{c,d\}$ is a bigon pair, then the inessential curve $e$ determined by $\{c,d\}$ satisfies $e  \doteq c \triangle d$.  In what follows we will use the fact that if $e$ and $e'$ are curves in a surface with $e \doteq e'$ then $e=e'$.  

\begin{proof}[Proof of Proposition~\ref{prop:curve pair pairs}]

We claim that bigon pairs $\{c,d\}$ and $\{c',d'\}$ form a sharing pair if and only if the following conditions hold:
\begin{enumerate}
\item each of $\{c,d'\}$ and $\{c',d\}$ is an nondegenerate torus pair, and
\item there is a curve that forms a torus triple with both $\{c,d'\}$ and $\{c',d\}$.
\end{enumerate}
The proposition follows from the claim, Proposition~\ref{prop:curve pairs}, and Lemma~\ref{lem:torus}.  The first direction of the claim can be verified from the picture.  In Figure~\ref{fig:determined} we indicate the curve $e$ that forms a torus triple with both $\{c,d'\}$ and $\{c',d\}$.

\begin{figure}
\includegraphics[scale=.275]{sharing_pair}
\hspace{.15in}
\includegraphics[scale=.275]{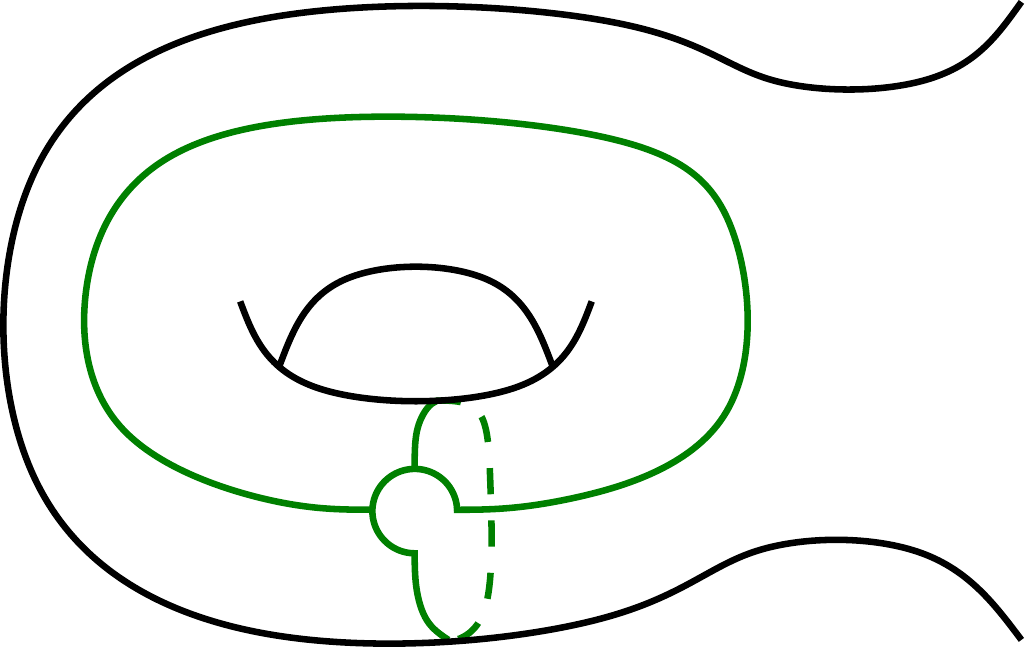}
\hspace{.15in}
\includegraphics[scale=.275]{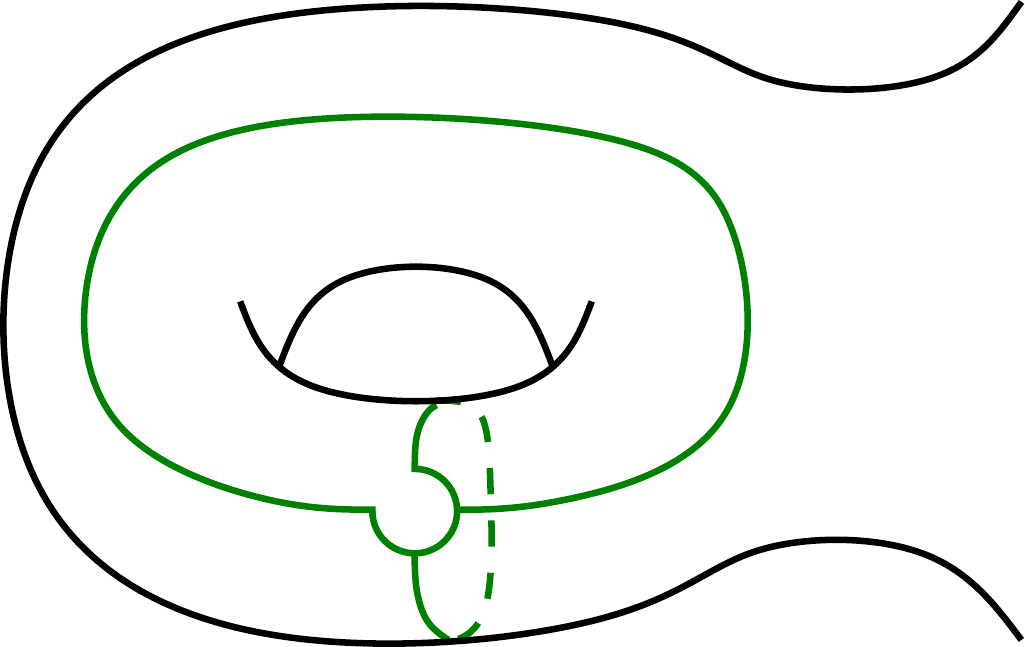}
\hspace{.15in}
\includegraphics[scale=.275]{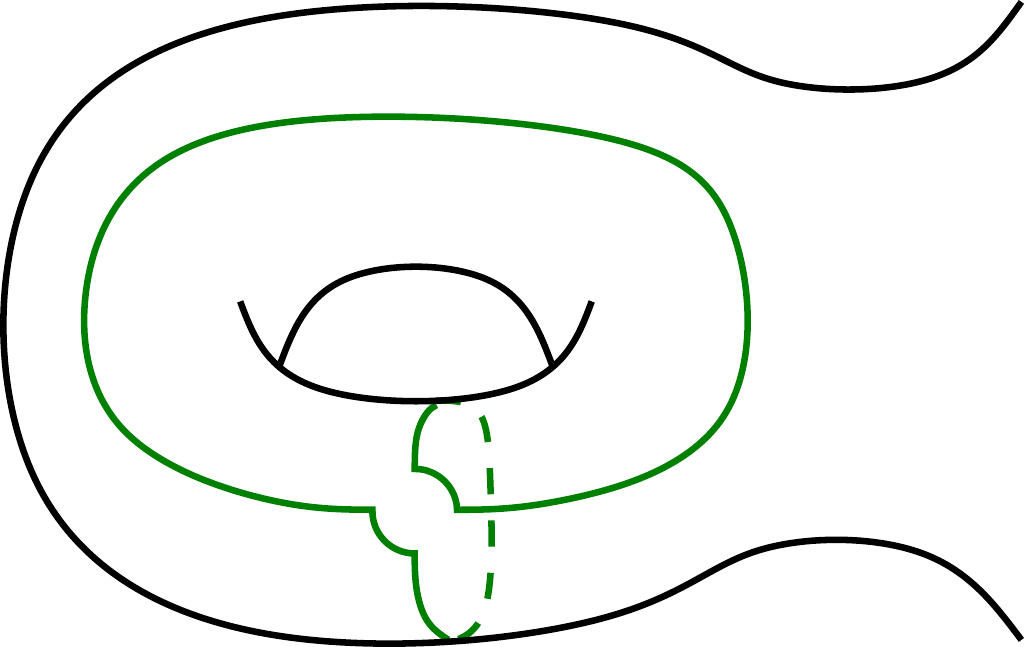}
\caption{\emph{Left to right:} a sharing pair $\{\{c,d\},\{c',d'\}\}$, the torus pair $\{c',d\}$, the torus pair $\{c,d'\}$, and the unique curve $e$ that forms a torus triple with both $\{c',d\}$ and $\{c,d'\}$}
\label{fig:determined}
\end{figure}

For the other direction, say that $e$ is the inessential curve determined by $\{c,d\}$, and that $e'$ is the inessential curve determined by $\{c',d'\}$.  Since there is a curve that forms a torus triple with both $\{c,d'\}$ and $\{c',d\}$, this means that $c \triangle d' \doteq c' \triangle d$.  Using the basic fact about symmetric differences that $A \triangle B = (A \triangle C) \triangle (B \triangle C)$, we have: 
\[
e \doteq c \triangle d = \left(c \triangle c'\right) \triangle \left(c' \triangle d\right) \doteq \left(c \triangle c'\right) \triangle \left(c \triangle d'\right) = c' \triangle d' \doteq e'.
\]
Thus $e=e'$, which is to say that $\{c,d\}$ and $\{c',d'\}$ determine the same inessential curve $e$.  We identify the complement of the interior of $e$ with $S_g^1$.  The pairs $\{c,d\}$ and $\{c',d'\}$ determine arcs $a$ and $a'$ in $S_g^1$.  If $a$ and $a'$ were not disjoint and linked, then this would violate the condition that $\{c,d'\}$ (and also $\{c',d\}$) is a nondegenerate torus pair.  This completes the proof. 
\end{proof}


\section{Connectedness of fine arc graphs}
\label{sec:fa}

Let $S_g^b$ denote the surface obtained from $S_g$ by deleting the interiors of $b$ disjoint disks.  Let $S = S_g^b$ with $b > 0$.  The goal of this section is to prove that three fine arcs graphs are connected: the fine arc graph $\FA(S)$, the fine nonseparating arc graph $\NA(S)$, and the fine linked arc graph $\LkA(S)$.  We begin by proving that $\FA(S)$ is connected (Proposition~\ref{prop:fa}), and then derive the connectivity of the other two graphs as corollaries (Corollaries~\ref{cor:nfa} and~\ref{cor:lka}).  For the proof of Theorem~\ref{thm:main}, we will only use the connectivity of $\LkA(S)$.

\p{The fine arc graph} We begin with the basic definitions.  An  \emph{arc} in $S=S_g^b$ is the image of a map $a: [0,1] \to S$.  We say that the arc is \emph{simple} if the map $a$ is injective, we say that the arc is \emph{proper} if $a^{-1}(\partial S)=\{0,1\}$, and we say that the arc is \emph{essential} if it is not homotopic into $\partial S$.  We say that two arcs have \emph{disjoint interiors} if they are disjoint away from $\partial S$.  When two arcs have no intersections at all (including at the boundary), we say that the arcs are \emph{completely disjoint}.

The fine arc graph $\FA(S)$ is the graph whose vertices are essential simple proper arcs in $S$ and whose edges connect vertices with disjoint interiors.  The next proposition states that $\FA(S)$ is connected; we note that for $S=S_0^1$, the graph $\FA(S)$ is empty, hence vacuously connected.

\begin{proposition}
\label{prop:fa}
For any $S=S_g^b$ with $b > 0$, the graph $\FA(S)$ is connected.
\end{proposition}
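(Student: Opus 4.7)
The plan is to show that the fine arc graph $\FA(S)$ has diameter at most $2$: for any two vertices $a$ and $a'$ I will produce a third vertex $b$ whose interior is disjoint from $a \cup a'$. Since the edges of $\FA(S)$ encode disjointness of interiors, such a $b$ is simultaneously adjacent to $a$ and to $a'$, so $a$ and $a'$ lie at distance at most $2$, and $\FA(S)$ is connected.

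First I would set up the topology of the complement. Because $a$ and $a'$ are tame simple arcs, the union $a \cup a'$ is a closed, nowhere-dense subset of $S$, so $U := S \setminus (a \cup a')$ is an open and dense subset of $S$. The arcs $a$ and $a'$ contribute only four endpoints to $\partial S$, hence $\partial S \setminus (a \cup a')$ is a disjoint union of at most four open boundary sub-arcs $J_1, \ldots, J_r$ of $\partial S$.

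The heart of the proof is to exhibit a single connected component $R$ of $U$ whose closure $\bar R$ in $S$ supports an essential simple proper arc of $S$. I would do this by contradiction: suppose every component of $U$ has closure a disk meeting $\partial S$ in at most one interval and meeting $a \cup a'$ in a single arc on the rest of its boundary, i.e.\ a boundary-parallel half-disk. Then $S$ decomposes as a union of such half-disks glued along $a \cup a'$, and by pushing the decomposition successively across these half-disks one sees that $a$ (say) is homotopic rel $\partial$ into $\partial S$, contradicting the essentiality of $a$. Thus some component $R$ of $U$ has a closure that is either not a disk or meets at least two distinct sub-arcs $J_i, J_j$ of $\partial S \setminus (a \cup a')$. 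In either case a standard construction inside $\bar R$ yields an essential simple proper arc $b$ whose interior lies in $R$ and whose endpoints lie on $\partial S$; essentiality of $b$ in $S$ is enforced by choosing the endpoints so that $b$ does not cobound a disk with a sub-arc of $\partial S$.

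The main obstacle is the pathology of the intersection $a \cap a'$, which can be as complicated as a Cantor set; consequently $U$ may have uncountably many components and each $\bar R$ can be topologically intricate. I would side-step this by avoiding any global enumeration of the components: I would choose a point $p$ in the interior of some $J_i$, push $p$ a small distance into $S$ transversely to $\partial S$, and define $R$ to be the unique component of $U$ that contains this pushed point. All of the nontriviality arguments then localize inside the single region $\bar R$, so the wildness of $a \cap a'$ elsewhere in $S$ is irrelevant, and the case analysis is reduced to whether $\bar R$ is a half-disk or not.
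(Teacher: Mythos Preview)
Your plan differs from the paper's---the paper projects to the classical arc graph $\A(S)$, invokes its connectivity, and then connects two isotopic arcs by a compactness argument along an isotopy between them---but your plan cannot work, because the statement you are trying to prove is false: $\FA(S)$ does \emph{not} have diameter at most $2$.

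Take $S=S_1^1$.  The classical arc graph $\A(S_1^1)$ is the Farey graph, which has infinite diameter; pick isotopy classes $[a],[a']$ at distance at least $3$ in $\A(S_1^1)$ and representatives $a,a'$ in minimal position.  If an essential arc $b$ had interior disjoint from $a\cup a'$, then $[b]$ would be adjacent (or equal) to both $[a]$ and $[a']$ in $\A(S_1^1)$, forcing their distance to be at most $2$.  Hence no such $b$ exists, and $a,a'$ lie at distance at least $3$ in $\FA(S_1^1)$.

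There are two concrete gaps in the outline.  First, the dichotomy is not exhaustive: the negation of ``every component is a half-disk'' is not ``some component is a non-disk or meets two of the $J_i$''; a component can perfectly well be a disk meeting $\partial S$ in at most one arc while the rest of its boundary consists of many pieces of $a\cup a'$ (this is exactly what happens for a filling pair of arcs, where $a$ is certainly essential).  Second, even when a disk region $\bar R$ does meet two boundary sub-arcs $J_i,J_j$, there is no ``standard construction'' producing an \emph{essential} arc of $S$ inside it.  Whether an arc in $\bar R$ from $J_i$ to $J_j$ is essential in $S$ is a global question---it may be homotoped into $\partial S$ through parts of $S$ lying outside $\bar R$---so your localisation to a single region discards precisely the information needed to decide essentiality.
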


Our proof of Proposition~\ref{prop:fa} is based on the proof of Bowden--Hensel--Webb that the fine (smooth) curve graph is connected \cite[Section 3]{BHW}.  As that proof relies on the connectivity of the classical curve graph, our proof relies in the connectivity of the classical arc graph $\A(S)$.  The vertices of $\A(S)$ are isotopy classes of essential simple proper arcs in $S$, where isotopies are allowed to move endpoints of arcs along the boundary of $S$.  The edges are pairs of vertices with disjoint representatives.  The arc complex is the flag complex associated to $\A(S)$.

\begin{proof}[Proof of Proposition~\ref{prop:fa}]

There is a natural simplicial map $\FA(S) \to \A(S)$ given by taking isotopy classes.  The arc complex is contractible \cite{Hatcher}, so in particular its 1-skeleton $\A(S)$ is connected.  Thus, it suffices to show that for any vertex of $\A(S)$, the subgraph of $\FA(S)$ spanned by its preimage is connected.  In other words, it suffices to show that between any two isotopic essential simple proper arcs in $S$ there is a path in $\FA(S)$ connecting the two.

Let $a$ and $b$ be isotopic vertices of $\A(S)$ and let $H : S^1 \times [0,1] \to S$ be an isotopy from $a$ to $b$.  For $t \in [0,1]$, let $a_t$ be the image of $S^1 \times \{t\}$, so $a_0=a$ and $a_1=b$.  For each vertex $c$ of $\FA(S)$, we define
\[
I_c = \{ t \in [0,1] \mid a_t \text{ is completely disjoint from } c \}
\]
Each $I_c$ is open.  Thus we may find a sequence of open intervals $I_0, \dots, I_k$ that cover $[0,1]$ and so each $I_i$ is some $I_{c_i}$.  We may further assume that $0 \in I_0$ and that $I_i \cap I_{j}$ is nonempty if and only if $|i-j|=1$.  Let $t_0=0$, let $t_{k+1}=1$, and for $i \in \{1,\dots,k\}$ let $t_i$ be an element of $I_i \cap I_{i+1}$.  For $i \in \{0,\dots,k+1\}$ let $a_i = a_{t_i}$.

By definition, each pair $\{a_i,a_{i+1}\}$ is completely disjoint from $c_i$.  Thus the sequence
\[
a=a_0,c_0,a_1,c_1,a_2,\dots,a_{k-1},c_{k-1},a_k=b
\]
is the desired path from $a$ to $b$ in $\FA(S)$.
\end{proof}

We remark that it is possible to define $\FA(S)$ in a similar manner when $S$ has punctures instead of boundary.  However, in some cases, this graph is not connected.  For instance, if $S=S_{g,1}$ and an arc $a$ spirals around the puncture infinitely many times relative to $b$, then $a$ and $b$ are not connected by a path in $\FA(S)$.  The part of the proof of Proposition~\ref{prop:fa} that fails for surfaces with punctures is that the sets $I_c$ are not always open.

\p{The fine nonseparating arc graph} We say that an arc in a surface $S$ is \emph{nonseparating} if its complement in $S$ is connected.  The fine nonseparating arc graph is the subgraph of $\FA(S)$ spanned by the nonseparating arcs.  

\begin{corollary}
\label{cor:nfa}
For any $S=S_g^b$ with $b > 0$, the graph $\NA(S)$ is connected.
\end{corollary}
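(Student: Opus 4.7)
The plan is to reduce directly to Proposition~\ref{prop:fa}. Given two nonseparating arcs $a, b \in \NA(S)$, apply Proposition~\ref{prop:fa} to produce a path
\[
a = a_0, a_1, \ldots, a_k = b
\]
in $\FA(S)$, and then modify this path so that every intermediate vertex lies in $\NA(S)$. The low-complexity surfaces are handled by inspection: if $S = S_0^1$ both graphs are empty, and if $S = S_0^2$ every vertex of $\NA(S)$ is an arc between the two boundary components, so the conclusion follows by running the proof of Proposition~\ref{prop:fa} while noting that all arcs one encounters can be chosen nonseparating. Henceforth assume $g \geq 1$ or $b \geq 3$.

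The main technical input is the following replacement lemma: whenever $\{c,d_1\}$ and $\{c,d_2\}$ are edges of $\FA(S)$ with $c$ separating, there exists a nonseparating arc $c' \in \FA(S)$ whose interior is disjoint from both $d_1$ and $d_2$. Granting the lemma, I would apply it to each separating intermediate $a_i$ (with $d_1 = a_{i-1}$ and $d_2 = a_{i+1}$) and replace $a_i$ by the resulting $a_i'$; at every stage the sequence remains a path in $\FA(S)$, and after all replacements it is a path in $\NA(S)$ from $a$ to $b$. Note the endpoints $a_0 = a$ and $a_k = b$ are already nonseparating, so no replacement is required there.

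To prove the replacement lemma, I would cut $S$ along the interiors of $d_1$ and $d_2$ to obtain a (possibly disconnected) surface $\widetilde S$ with $\chi(\widetilde S) = \chi(S)$, and exhibit a properly embedded arc in $\widetilde S$ whose image in $S$ is an essential nonseparating simple proper arc. Under the hypothesis $g \geq 1$ or $b \geq 3$, some component of $\widetilde S$ is topologically rich enough to admit such an arc: either it has positive genus (yielding a nonseparating arc on a handle), or it inherits boundary segments from two distinct components of $\partial S$ (yielding a nonseparating arc between them). The main obstacle is this existence claim, which will require a case analysis based on whether the endpoints of $d_1, d_2$ lie on the same or different components of $\partial S$ and on whether $d_1 \cup d_2$ is separating, and which must verify that the arc produced in $\widetilde S$ genuinely remains nonseparating after regluing.
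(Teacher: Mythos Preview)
Your overall strategy---start with a path in $\FA(S)$ and replace each separating vertex by a nonseparating one---is exactly what the paper does. The difference lies in how you prove the replacement lemma, and here the paper's route is considerably shorter.

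You propose to cut $S$ along the interiors of $d_1$ and $d_2$ and then hunt for a nonseparating arc in some component of the result. This forces a case analysis on the relative positions and topological types of $d_1$ and $d_2$, and it becomes delicate when $d_1$ and $d_2$ intersect (they need not have disjoint interiors, since each is only required to be adjacent to $c$, not to the other). More tellingly, your construction of $c'$ never uses the hypothesis that $c$ is separating, which is precisely the structural fact that makes the problem easy.

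The paper exploits $c$ directly. Since $c$ is separating, each of $d_1$ and $d_2$ lies entirely in one of the two sides $R_1, R_2$ of $c$. If they lie on different sides they already have disjoint interiors, and one may simply delete $c$ from the path. If they lie on the same side, say $R_1$, then any nonseparating arc $c'$ contained in $R_2$ is automatically disjoint from $d_1 \cup d_2$. The existence of such a $c'$ is immediate: the endpoints of $c$ lie on a single boundary component $d$ of $S$, and the side $R_2$ either has positive genus (take a nonseparating arc based at $d$ running over a handle) or, being planar with $c$ essential, contains another boundary component $d'$ of $S$ (take an arc from $d$ to $d'$). This argument is uniform in $g$ and $b$, so your separate treatment of $S_0^1$ and $S_0^2$ is also unnecessary.
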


\begin{proof}

Let $a$ and $b$ be vertices of $\NA(S)$. By Proposition~\ref{prop:fa} there is a path
\[
a=a_0,\dots,a_k=b
\]
in $\FA(S)$ between $a$ and $b$.  We will show that we can replace this path with one that lies entirely in $\NA(S)$, by either removing the separating vertices or replacing them with nonseparating vertices.   

To this end we have the following claim: if $x$ is a separating arc in $S$ and $R \subseteq S$ is one side of $x$ (meaning the closure of one of the two complementary components), then there is a nonseparating arc $y$ in $S$ that is contained in $R$.  Since $x$ is separating it must be that the endpoints of $x$ lie on the same component of $\partial S$, say $d$.  If $R$ is a planar surface then since $x$ is essential $R$ must contain some other component of $\partial S$, say $d'$.  As such we may take $y$ to be any arc in $R$ connecting $d$ to $d'$.  If $R$ has positive genus, then we may take $y$ to be any nonseparating arc in $R$ connecting $d$ to itself.  

We may now complete the proof of the lemma.  Suppose that the given path from $a$ to $b$ has a separating arc $a_i$.  If $a_{i-1}$ and $a_{i+1}$ have disjoint interiors, we may remove $a_i$ from the path.  Otherwise, it must be that $a_{i-1}$ and $a_{i+1}$ lie on the same side of $a_i$.  By the claim there is a nonseparating arc $a_i'$ that is contained on the other side of $a_i$.  This $a_i'$ must be connected by edges to both $a_{i-1}$ and $a_{i+1}$.  Thus we may replace $a_i$ with $a_i'$ in the given path.  The lemma follows.
\end{proof}

\p{The fine linked arc graph} Let $S=S_g^b$ be a surface with $g \geq 1$ and $b > 0$.  Let $d_0$ be a distinguished component of $\partial S$. We say that two vertices $a$ and $b$ of $\FA(S)$ with disjoint interiors are \emph{linked} at $d_0$ if all four endpoints lie on $d_0$ and the boundary curve for any sufficiently small neighborhood of $d_0$ alternates between intersections with $a$ and $b$.  Some examples of linked arcs in $S_g^1$ with disjoint interiors are shown in Figure~\ref{fig:linked}.

\begin{figure}
\includegraphics[scale=.25]{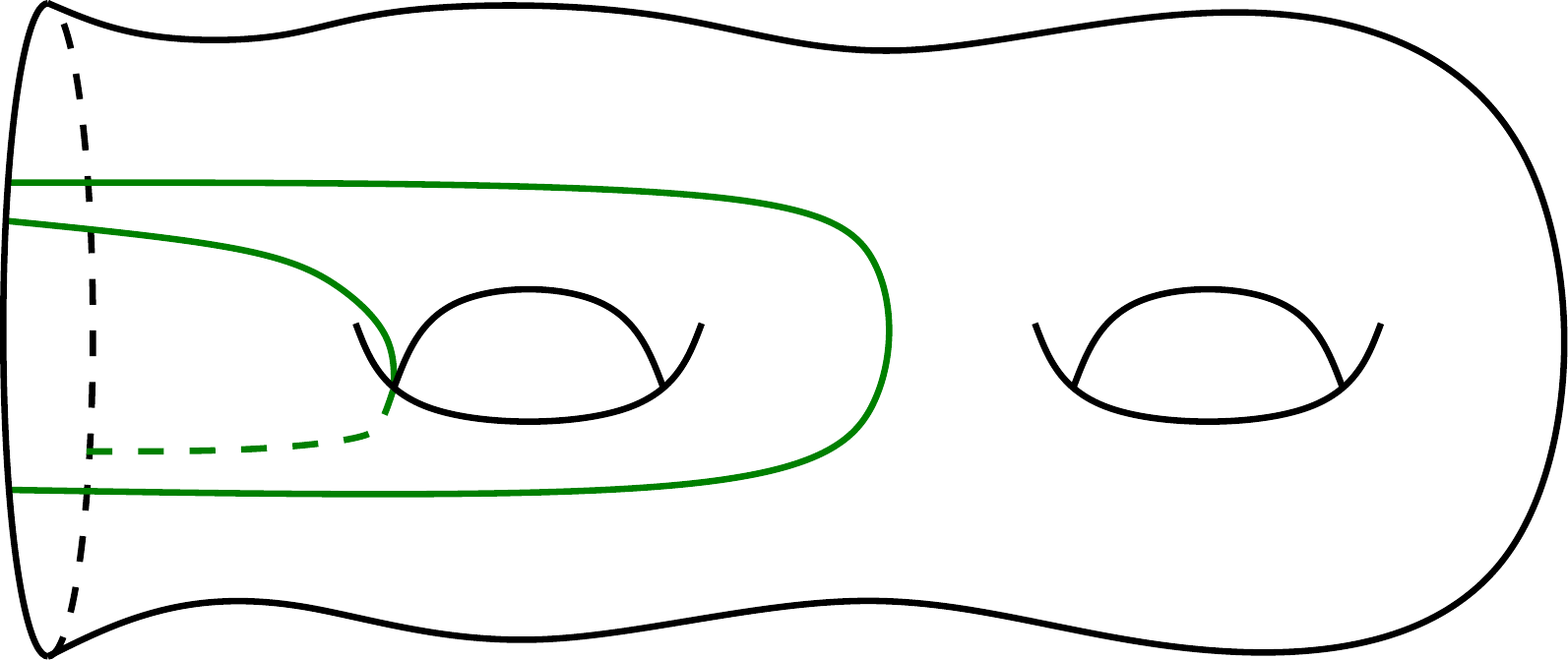}
\hspace{.5in}
\includegraphics[scale=.25]{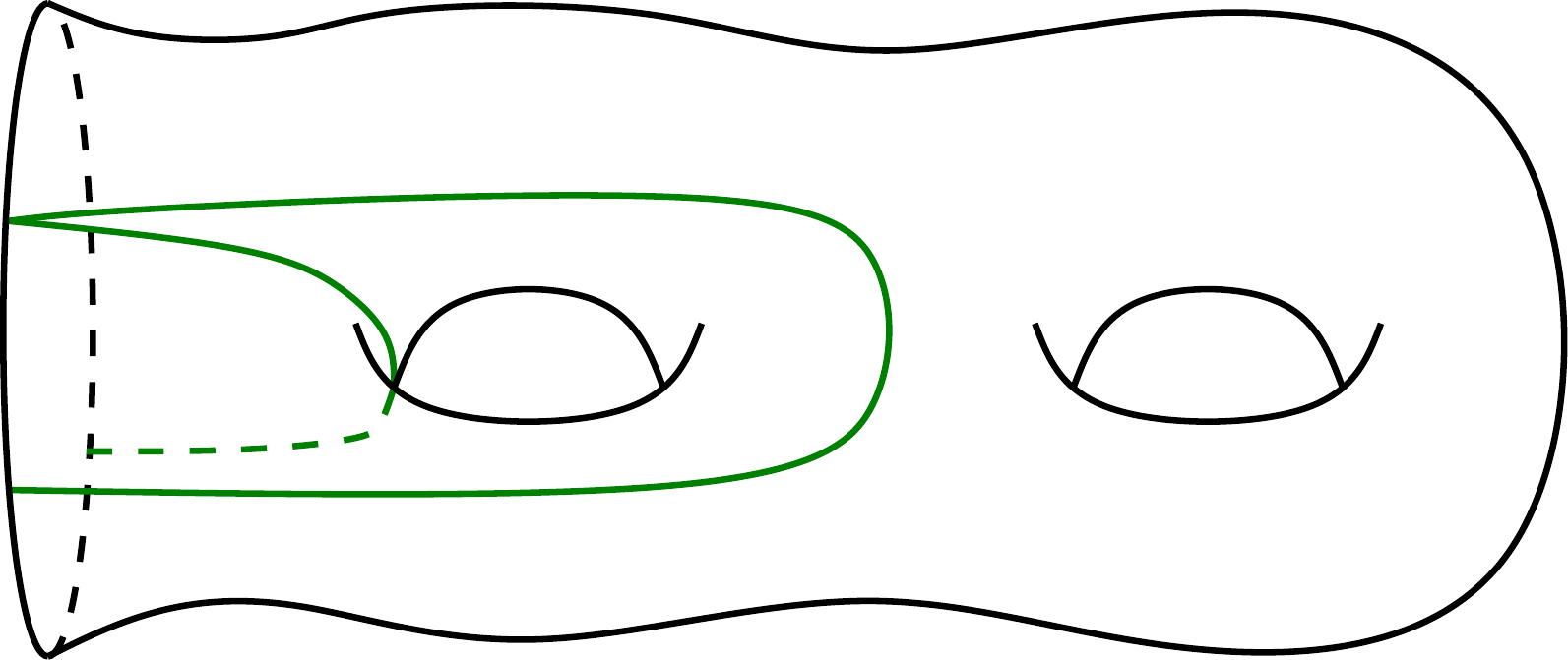}
\caption{Two types of linked pairs of arcs in $S_2^1$}
\label{fig:linked}
\end{figure}

We define $\LkA(S,d_0)$ to be the graph whose vertices are nonseparating simple proper arcs in $S$ with both endpoints at $d_0$ and whose edges connect arcs with disjoint interiors that are linked at $d_0$.  When convenient we suppress $d_0$ in the notation in what follows and write $\LkA(S)$.

\begin{corollary}
\label{cor:lka}
Let $S=S_g^b$ with $g \geq 1$ and $b > 0$, and let $d_0$ be a component of $\partial S$.  The graph $\LkA(S,d_0)$ is connected.
\end{corollary}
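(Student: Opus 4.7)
The plan is to mimic the proof of Corollary~\ref{cor:nfa}: I would take a path in $\NA(S)$ supplied by that corollary and massage it into a path in $\LkA(S,d_0)$. Given vertices $a,b \in \LkA(S,d_0)$, Corollary~\ref{cor:nfa} gives a path $a = a_0, a_1, \ldots, a_k = b$ in $\NA(S)$, and I would modify this path in two passes.

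In the first pass, I would adjust each intermediate arc so that both of its endpoints lie on $d_0$. For any $a_i$ that has an endpoint not on $d_0$, the idea is to replace $a_i$ with a short chain of nonseparating arcs, each with both endpoints on $d_0$ and with disjoint interior from its neighbors in the chain, thus providing a detour from $a_{i-1}$ to $a_{i+1}$. The key supporting claim is that given any two nonseparating arcs $x,y$ in $S$ with disjoint interiors, the complement $S \setminus (x \cup y)$ contains a nonseparating arc with both endpoints on $d_0$. This uses $g \geq 1$: the cut surface contains a component adjacent to $d_0$ with enough topology to accommodate such an arc.

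In the second pass, I would enforce the linking condition. For each consecutive pair $x,y$ in the modified path, if $x$ and $y$ are already linked at $d_0$ then the edge lies in $\LkA$; otherwise I would insert an arc $c \in \LkA$ which has disjoint interior from, and is linked at $d_0$ with, both $x$ and $y$. Such a $c$ is built by choosing its two endpoints on $d_0$ so that they interleave the endpoints of $x$ (and simultaneously of $y$), then routing $c$ through $S \setminus (x \cup y)$ while keeping it nonseparating in $S$; once again $g \geq 1$ provides the room to do this.

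The main obstacle is the case analysis in the second pass: depending on whether $x$ and $y$ share zero, one, or two endpoints on $d_0$, and on the cyclic order of those endpoints, the allowed positions for the endpoints of $c$ differ, and one must check in each case that the complement admits an arc with the required endpoint positions that is nonseparating in $S$. A related subtlety is that $S \setminus (x \cup y)$ may be disconnected, so in some configurations the single auxiliary arc $c$ must be replaced by a short chain of intermediate arcs in $\LkA$, and one must confirm that such a chain always exists using the positive genus hypothesis.
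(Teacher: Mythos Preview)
Your two-pass plan is exactly the paper's approach: take a path in $\NA(S)$ and upgrade each edge to an edge of $\LkA(S,d_0)$ by inserting an auxiliary arc linked with both endpoints of the edge.

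Two remarks. First, your ``first pass'' (forcing all intermediate arcs to have both endpoints on $d_0$) is a step the paper does not carry out explicitly; it is genuinely needed when $b>1$, though in the application only the case $b=1$ is used, where every proper arc already has both endpoints on $d_0$ and the first pass is vacuous. Second, the case analysis you anticipate in the second pass can be avoided entirely. The key observation is that if $x$ and $y$ are nonseparating arcs with disjoint interiors, then $S\setminus(x\cup y)$ has at most two components (cut along $x$: still connected; cut along $y$: at most two pieces). Looking at a thin annular collar of $d_0$, the four arc-ends of $x$ and $y$ cut it into four sectors, and exactly two opposite sectors are bounded by one arc of $x$ and one arc of $y$; the other two opposite sectors therefore lie in the same complementary component, and an arc joining them through that component is automatically linked with both $x$ and $y$ (and nonseparating, since anything linked with another arc is nonseparating). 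This single topological count replaces the configuration-by-configuration analysis you describe, and shows that one inserted arc always suffices rather than a chain.

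So your outline is sound, but as written it is a plan with acknowledged unresolved obstacles; the at-most-two-components observation is the missing idea that turns it into a short complete proof.
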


\begin{proof}

Let $a$ and $b$ be vertices of $\LkA(S,d_0)$. By Corollary~\ref{cor:nfa} there is a path
\[
a=a_0,\dots,a_k=b
\]
in $\NA(S)$.   

For a given edge $\{a_i,a_{i+1}\}$ in this path where $a_i$ and $a_{i+1}$ are not linked, we would like to show there is an arc $b_i$ that is linked with both $a_i$ and $a_{i+1}$ and disjoint from their interiors.  For then we may obtain a path from $a$ to $b$ in $\LkA(S)$ by inserting all such $b_i$ into the above sequence of vertices.

So let $\{x,y\}$ be an arbitrary edge in $\NA(S)$ where $x$ and $y$ are unlinked.  Since each of $x$ and $y$ is nonseparating, it follows that $x \cup y$ separates $S$ into at most two components.  We consider a small annular neighborhood $A$ of $d_0$.  The intersections of $x$ and $y$ with $A$ divide it into 4 components.  These components come in a cyclic order; call them $A_0$, $A_1$, $A_2$, and $A_3$.  Exactly two of the 4 components of $A$ have the property that they are bounded by one arc of $x$ and one arc of $y$.  These components are not adjacent; say they are $A_0$ and $A_2$.  Since $x \cup y$ separates $S$ into at most two components, $A_1$ and $A_3$ lie in the same component of $S$ cut along $x \cup y$.  Thus there is an arc $z$ in $S$ that is disjoint from $x$ and $y$ away from $d_0$ and connects $A_1$ to $A_3$.  This arc $z$ is thus linked with both $x$ and $y$ by definition.  By virtue of being linked with other arcs, $z$ is nonseparating, hence a vertex of $\LkA(S,d_0)$.  The corollary follows.
\end{proof}


\section{Automorphisms of the extended fine curve graph}
\label{sec:efc}

In this section we prove Theorem~\ref{thm:efc}, which states that the natural map $\nu : \Homeo(S) \to \EFC(S)$ is an isomorphism.  As discussed in the introduction, the proof we give is the original one, due to Farb and the second author.  We emphasize that the proof applies to all surfaces without boundary, including those whose fundamental group is not finitely generated.  

In Section~\ref{sec:conv} we introduce convergent sequences of vertices and prove several related lemmas about them.  Then in Section~\ref{sec:efc pf} we use convergent sequences to prove Theorem~\ref{thm:efc}.

\subsection{Convergent sequences}
\label{sec:conv}

Let $S$ be a surface without boundary.  We say that a sequence of vertices $(c_i)$ of $\EFC(S)$ \emph{converges} to a point $x \in S$ if the corresponding curves converge to $x$ in the usual sense: every neighborhood of $x$ contains all but finitely many of the $c_i$.  In this case we write $\lim(c_i) = x$.  If $(c_i)$ is convergent, it must be that there exists $M > 0$ so that each $c_i$ with $i > M$ is inessential.

The main goal of this section is to prove Lemma~\ref{lemma:conv}, which states that automorphisms of $\EFC(S)$ preserve convergent sequences.  We also prove several related statements, Corollaries~\ref{cor:coincident}, \ref{cor:conv conv}, and~\ref{cor:conv curve}. Before proving these we introduce a technical tool, the connect-the-dots lemma, Lemma~\ref{lemma:connect}.  

\p{Connect-the-dots lemma} We will use the following technical lemma to prove that automorphisms of $\EFC(S)$ preserve convergent sequences.

\begin{lemma}
\label{lemma:connect}
Let $S$ be a surface without boundary, let $(c_i)$ be an infinite sequence of vertices of $\EFC(S)$, and suppose $R$ is a compact subsurface of $S$ with the property that infinitely many of the $c_i$ intersect $R$.  Then there is a simple closed curve $a$ in $R$ that intersects infinitely many of the $c_i$.  
\end{lemma}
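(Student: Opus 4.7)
My plan is to prove the lemma by a ``connect-the-dots'' argument. For each $i$ with $c_i \cap R \neq \emptyset$, choose a point $p_i \in c_i \cap R$; since $R$ is compact, after passing to a subsequence I may assume $p_i \to x$ for some $x \in R$. I then aim to build a simple closed curve $a \subset R$ that passes through $x$ and through infinitely many of the $p_i$: such an $a$ automatically meets each corresponding $c_i$ at $p_i$.

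Before the main construction I would first dispose of the boundary. If infinitely many $c_i$ meet $\partial R$, then since $\partial R$ has only finitely many components the pigeonhole principle gives a single boundary component met by infinitely many $c_i$, and this component is itself a simple closed curve in $R$ that serves as the required $a$. Otherwise, all but finitely many $c_i$ are disjoint from $\partial R$, and being connected they lie entirely in $\mathrm{int}(R)$; the chosen $p_i$ are then interior points.

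For the main construction I work in a coordinate chart $U$ around $x$, chosen so that $U \cap R$ is a closed disk when $x \in \mathrm{int}(R)$ and a closed half-disk when $x \in \partial R$. After a further passage to a subsequence, indexed as $(p_{i_k})$, I arrange that the radii $r_k := |p_{i_k} - x|$ in local coordinates are strictly decreasing to $0$ and that the angular directions $\arg(p_{i_k} - x)$ are confined to a narrow sector about a single limiting direction $\theta$. I then build an arc $\gamma$ from $p_{i_1}$ to $x$ by concatenating, for each $k$, a short simple arc from $p_{i_k}$ to $p_{i_{k+1}}$ lying inside the sectorial half-annulus $\{q : r_{k+1} \le |q - x| \le r_k\} \cap U \cap R$ and meeting its bounding circles only at the endpoints. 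Consecutive sub-arcs then share only $p_{i_{k+1}}$, so the concatenation is injective; since $r_k \to 0$, it extends continuously to $x$, giving a simple arc $\gamma$ in $U \cap R$. Because $\gamma$ is confined to a narrow sector, a disjoint return arc $\gamma'$ from $p_{i_1}$ to $x$ can be drawn through the opposite side of $x$ in $U \cap R$, and $a := \gamma \cup \gamma'$ is then the desired simple closed curve in $R$, meeting every $c_{i_k}$ at $p_{i_k}$.

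The main obstacle is the construction of $\gamma$: it must be simple, must pass through infinitely many prescribed points accumulating at $x$, and must stay inside $R$. The two passages to a subsequence, enforcing strictly decreasing radii and angular confinement, are precisely what makes this possible: the radii force consecutive sub-arcs to live in essentially disjoint annular regions, and the angular control leaves a half-neighborhood of $x$ free for the return arc $\gamma'$. The boundary case $x \in \partial R$ is handled by intersecting annuli with the half-plane; since the $p_{i_k}$ are interior to $R$, the needed sub-arcs still exist.
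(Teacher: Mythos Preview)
Your argument is correct and follows the same overall outline as the paper's proof: reduce to the boundary case by pigeonhole, then pick points $p_i\in c_i\cap R$, extract a convergent subsequence with limit $x$, and build a simple closed curve in a disk around $x$ through infinitely many of the $p_i$.

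The difference lies in how the curve through the accumulating points is produced. The paper does not build it by hand: instead it arranges $x$ to be interior to $R$, takes an open disk $U$ about $x$, and invokes Richards' classification of infinite-type surfaces to identify $U\setminus(\{x_i\}\cup\{x\})$ with the flute surface $\C\setminus(\{1/n\}\cup\{0\})$; in that model any curve containing a segment $[0,\epsilon]$ visibly passes through infinitely many marked points. Your approach replaces this appeal to a classification theorem with an explicit polar-coordinate construction (strictly decreasing radii, angular confinement to a sector, annular sub-arcs, and a return arc on the opposite side). This is more elementary and entirely self-contained, and it lets you handle the case $x\in\partial R$ directly via half-disks rather than by the paper's preliminary manoeuvre of choosing the $x_i$ away from a fixed collar of $\partial R$. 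The paper's route is shorter once one is willing to cite Richards, and it adapts more readily to the higher-dimensional setting mentioned in the introduction; your route avoids that external input at the cost of a longer hands-on construction.
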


\begin{proof}

In the case where infinitely many $c_i$ intersect the boundary $\partial R$, we may take $a$ to be any component of $\partial R$ that intersects infinitely many $c_i$.  The other case is where infinitely many $c_i$ intersect the interior of $R$.  For each $i$ we choose a point $x_i$ in $c_i \cap R$.  Up to passing to a subsequence, we may assume that the $x_i$ converge to a point $x \in R$.  We may also assume without loss of generality that $x$ lies in the interior or $R$.  Indeed each component of each $c_i \setminus \partial R$ is an open interval, and so we may choose each $x_i$ to lie outside of some fixed neighborhood of $\partial R$.  

Let $U$ be an open disk in $R$ that contains $x$.  By the classification of surfaces of infinite type \cite[Theorem 1]{Richards}, the surface $U \setminus \left( \{x_i\} \cup \{x\} \right)$ is homeomorphic to $F = \C \setminus \left( \{1/n\} \cup \{0\} \right)$ (this surface is sometimes called the flute surface).  Regarding the punctures in $F$ as marked points, there is clearly a simple closed curve containing infinitely many marked points of $F$ (any curve containing a nontrivial interval $[0,\epsilon]$).  Any such curve corresponds to a curve in $U$ containing infinitely many $x_i$.  This completes the proof.
\end{proof}

\p{Convergent sequences are preserved} For the statement of Lemma~\ref{lemma:conv}, we require one more definition.  We say that a vertex $a$ of $\EFC(S)$ \emph{intersects the tail} of a convergent sequence of vertices $(c_i)$ if there are infinitely many $i$ so that $a$ intersects $c_i$.  

\begin{lemma}
\label{lemma:conv}
Let $S$ be a surface without boundary.  Automorphisms of $\EFC(S)$ preserve convergent sequences.
\end{lemma}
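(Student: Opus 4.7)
The plan is to encode convergence combinatorially via the \emph{tail-intersection set}
\[
T(c_i) := \{a \in \EFC(S) : a \cap c_i \neq \emptyset \text{ for infinitely many } i\},
\]
which satisfies $T(\alpha(c_i)) = \alpha(T(c_i))$ because $\alpha$ preserves both adjacency and non-adjacency in $\EFC(S)$. Given $(c_i) \to x$, I would first verify two properties of $T(c_i)$ that transfer through $\alpha$: it is nonempty, and any two of its elements intersect. Nonemptiness comes from Lemma~\ref{lemma:connect} applied to a compact disk around $x$, which eventually contains every $c_i$; pairwise intersection follows from the observation that $T(c_i) \subset \{a \in \EFC(S) : x \in a\}$, since any curve meeting $c_i$ for infinitely many $i$ has points in arbitrarily small neighborhoods of $x$ and hence contains $x$ by closedness.

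Next I would show that $(\alpha(c_i))$ has a unique accumulation point $y \in S$, meaning that every neighborhood of $y$ meets $\alpha(c_i)$ for infinitely many $i$. Writing $T := T(\alpha(c_i))$, existence comes from picking any $a \in T$, selecting a point of $\alpha(c_i) \cap a$ for each of the infinitely many $i$ witnessing $a \in T$, and extracting a subsequential limit from the compact curve $a$. Uniqueness follows from the observation that two distinct accumulation points $y_1, y_2$ would admit disjoint compact neighborhoods $R_1, R_2$ each meeting infinitely many $\alpha(c_i)$, so Lemma~\ref{lemma:connect} would produce disjoint curves $a_j \subset R_j$ in $T$, contradicting the pairwise-intersection property.

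The hard part will be promoting the unique accumulation point $y$ to a genuine limit of $(\alpha(c_i))$, that is, ensuring that every neighborhood of $y$ eventually \emph{contains} the curves $\alpha(c_i)$ rather than merely meeting them. I would argue by contradiction: suppose some open $U \ni y$ fails to eventually contain the $\alpha(c_i)$. Choose an open disk $V$ with $y \in V$ and $\overline{V} \subset U$, so that $\partial V$ is a single inessential simple closed curve in $\EFC(S)$, and split the index set into $\mathcal A = \{i : \alpha(c_i) \cap V \neq \emptyset\}$ and $\mathcal B = \{i : \alpha(c_i) \not\subset U\}$, both of which are infinite. If $\mathcal A \cap \mathcal B$ is infinite, then each such $\alpha(c_i)$ meets both $V$ and $S \setminus \overline{V}$ and hence crosses $\partial V$ by connectedness, placing $\partial V \in T$; meanwhile Lemma~\ref{lemma:connect} applied to a compact disk $R \subset V$ with $y$ in its interior (met by infinitely many $\alpha(c_i)$ since $y$ is an accumulation point) yields a curve $a \subset R \subset V$ in $T$ disjoint from $\partial V$, contradicting the pairwise-intersection property. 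If $\mathcal A \cap \mathcal B$ is instead finite, then the subsequence of $(\alpha(c_i))$ indexed by $\mathcal B \setminus \mathcal A$ is infinite and lies entirely in $S \setminus V$; its preimage under $\alpha$ is a subsequence of $(c_i)$ still converging to $x$, so rerunning the accumulation-existence argument on this subsequence produces an accumulation point $z$ of the full sequence with $z \in S \setminus V$, hence $z \neq y$, once again contradicting uniqueness.
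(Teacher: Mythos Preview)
Your proof is correct and follows essentially the same approach as the paper: both identify the tail-intersection set $T(c_i)$, observe that its nonemptiness together with pairwise intersection of its elements is an automorphism-invariant property, and use the connect-the-dots Lemma~\ref{lemma:connect} to manufacture disjoint elements of $T$ whenever convergence fails. The paper packages this as a single biconditional---$(c_i)$ is convergent if and only if $T(c_i)$ is nonempty and any two of its elements intersect---which lets the reverse direction be handled in one stroke by contrapositive, whereas you argue directly and so must separately extract a unique accumulation point and then promote it to a limit via your $\mathcal A/\mathcal B$ case split; the ideas are the same, but stating the full characterization up front eliminates that extra case analysis.
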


\begin{proof}

It suffices to prove the following statement.  Suppose that $(c_i)$ is a sequence of vertices of $\EFC(S)$.  Then $(c_i)$ is convergent if and only if the following two conditions hold:
\begin{enumerate}
\item there exists a vertex $a$ that intersects the tail of $(c_i)$, and
\item if $a$ and $b$ are distinct vertices that intersect the tail of $(c_i)$, then $a$ and $b$ intersect.
\end{enumerate}

The forward direction follows immediately from the definition of a convergent sequence and the fact that vertices of $\EFC(S)$ correspond to closed subsets of $S$.  Indeed, if $(c_i)$ converges to the point $x$ then any two vertices $a$ and $b$ that intersect the tail of $(c_i)$ must intersect at the point $x$.

For the reverse direction, suppose that $(c_i)$ is not convergent.  In the case where the sequence $(c_i)$ leaves every compact subsurface of $S$, the first condition fails: there is no vertex $a$ intersecting each $c_i$.  Thus, we may assume that there is a compact subsurface $R \subseteq S$ with the property that $c_i \cap R$ is nonempty for infinitely many $i$.  We will show that there exist disjoint curves $a$ and $b$ that intersect the tail of $(c_i)$.

Since $R$ is compact, we may choose a subsequence $(c_{i_j})$ of $(c_i)$ and a sequence of points $x_{i_j} \in c_{i_j}$ with the property that $x_{i_j}$ converges to a point $x$ in $R$.  Let $U$ be a closed disk in $R$ and let $V$ be an open disk in $R$ with the following properties:
\begin{enumerate}
\item $x \in U \subseteq V \subseteq R$,
\item $R \setminus V$ is a compact surface, and
\item infinitely many $c_i$ are not contained in $V$.
\end{enumerate}
The third condition is attainable since $(c_i)$ is not convergent.  By passing to a further subsequence, we may assume that each $c_{i_j}$ lies in $U$.  By Lemma~\ref{lemma:connect}, there is a curve $a$ in $U$, hence in $S$, with the property that $a$ intersects infinitely many $c_{i_j}$.  

By the second and third conditions on $U$ and $V$ and the assumption that $(c_i)$ does not leave $R$, we have that $(c_i)$ and $R \setminus V$ satisfy the hypotheses of Lemma~\ref{lemma:connect} (the subsurface $R \setminus V$ here is the subsurface $R$ in the statement of the lemma).  Thus by Lemma~\ref{lemma:connect} there is a curve $b$ in $R \setminus V$ that intersects infinitely many of the $c_i$.  The curves $a$ and $b$ are disjoint since $a \subseteq U$ and $b \subseteq R \setminus U$.  Both curves intersect the tail of $(c_i)$ by construction.  This completes the proof.
\end{proof}

\p{Coincidence of convergent sequences is preserved} For the statement of the following corollary, we say that two convergent sequence of vertices of $\EFC(S)$ are \emph{coincident} if they converge to the same point of $S$.  We also define the \emph{interleave} of two sequences $(c_i)$ and $(d_i)$ to be the sequence $c_1,d_1,c_2,d_2,\dots$.  

We have the following corollary of Lemma~\ref{lemma:conv}.  The first statement follows from the definition of convergence in point set topology, and the second statement follows from Lemma~\ref{lemma:conv}.

\begin{corollary}
\label{cor:coincident}
Let $S$ be a surface without boundary.  Let $(c_i)$ and $(d_i)$ be two convergent sequences of vertices of $\EFC(S)$.  Then $(c_i)$ and $(d_i)$ are coincident if and only if the interleave of $(c_i)$ and $(d_i)$ is convergent.  In particular, automorphisms of $\EFC(S)$ preserve coincidence of convergent sequences.
\end{corollary}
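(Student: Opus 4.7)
The plan is to split the corollary into the biconditional characterization of coincidence and the preservation statement, and to prove the former directly from the point-set definition of convergence and the latter as a formal consequence of Lemma~\ref{lemma:conv}.

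For the forward direction of the biconditional, I would assume both $(c_i)$ and $(d_i)$ converge to a common point $x \in S$. Given any neighborhood $U$ of $x$, only finitely many $c_i$ fail to lie in $U$ and only finitely many $d_i$ fail to lie in $U$, so the interleaved sequence $c_1, d_1, c_2, d_2, \dots$ also has only finitely many terms outside $U$, and therefore converges to $x$. For the reverse direction, I would observe that $(c_i)$ and $(d_i)$ are each subsequences of the interleave, so if the interleave converges to a point $y$, both subsequences converge to $y$, making them coincident.

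The second statement then follows almost immediately. Let $\alpha$ be an automorphism of $\EFC(S)$ and let $(c_i)$ and $(d_i)$ be coincident convergent sequences. By the biconditional just established, the interleave is a convergent sequence. Applying $\alpha$ termwise produces exactly the interleave of $(\alpha(c_i))$ and $(\alpha(d_i))$, and this image sequence is convergent by Lemma~\ref{lemma:conv}. A second application of the biconditional shows that $(\alpha(c_i))$ and $(\alpha(d_i))$ are coincident, as required.

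No step here presents a genuine obstacle: the topological content was already absorbed into Lemma~\ref{lemma:conv}, and what remains is a routine manipulation of subsequences. The only point worth noting is that the interleave construction is the right bookkeeping device to upgrade preservation of individual convergent sequences to preservation of the relation of converging to a common point.
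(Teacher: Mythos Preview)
Your proposal is correct and follows exactly the approach the paper indicates: the paper states only that the biconditional follows from the point-set definition of convergence and that the preservation statement follows from Lemma~\ref{lemma:conv}, and you have spelled out precisely those details. Nothing further is needed.
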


\p{Convergence of convergent sequences is preserved} For the next corollary to Lemma~\ref{lemma:conv}, we say that a sequence of convergent sequences
\[
(c_i^1), (c_i^2), (c_i^3), \dots
\]
in $\EFC(S)$ \emph{converges} if the sequence of limit points
\[
\lim (c_i^1), \lim (c_i^2), \lim (c_i^3), \dots
\]
converges to a point $x \in S$.  In this case we say that the sequence converges to $x$.  

A \emph{diagonal sequence} for a sequence of sequences as above is a sequence $(d_j)$ with each $d_j$ equal to some $c_i^j$.  In other words, there is a function $D : \N \to \N$ so that $d_j = c_{D(j)}^j$.  We impose a partial order on diagonal sequences for convergent sequences as follows: $(d_j) \preceq (e_j)$ if the corresponding functions satisfy $D(j) \leq E(j)$ for all $j$.  In the statement of the next corollary, we say that a diagonal subsequence is sufficiently large if it is sufficiently large with respect to this ordering.

Let $(c_i^1), (c_i^2), (c_i^3), \dots$ be a sequence of convergent sequences of vertices of $\EFC(S)$ and let $x \in S$.  Then this sequence converges to $x \in S$ if and only if all sufficiently large diagonal subsequences converge to $x$.  We thus have the following consequence of Lemma~\ref{lemma:conv}.

\begin{corollary}
\label{cor:conv conv}
Let $S$ be a surface without boundary.  Automorphisms of $\EFC(S)$ preserve convergent sequences of convergent sequences of vertices of $\EFC(S)$.  
\end{corollary}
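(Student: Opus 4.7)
The plan is to package convergence of a sequence of convergent sequences into two properties already known to be preserved by automorphisms of $\EFC(S)$: convergence of a single sequence of vertices (Lemma~\ref{lemma:conv}) and coincidence of two convergent sequences (Corollary~\ref{cor:coincident}). Concretely, I would first prove the following combinatorial reformulation: a sequence of convergent sequences $(c_i^1), (c_i^2), \dots$ converges if and only if there exists a diagonal sequence $(d_j)$ such that every diagonal sequence $(e_j)$ with $(e_j) \succeq (d_j)$ is convergent, and any two such larger diagonals are coincident with one another.

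The forward direction of this reformulation is immediate from the characterization stated just before the corollary: if the sequence of sequences converges to $x$, then some $(d_j)$ witnesses that every $(e_j) \succeq (d_j)$ converges to $x$, and any two such diagonals are in particular convergent and coincident. For the reverse direction, pick any $(e_j) \succeq (d_j)$ and let $x$ be its limit; coincidence forces every other diagonal above $(d_j)$ to converge to $x$ as well, and the same characterization then yields convergence of the sequence of sequences to $x$.

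With the reformulation in hand, the corollary follows by a formal transport argument. An automorphism $\alpha$ induces a bijection between diagonal sequences of $(c_i^j)$ and diagonal sequences of $(\alpha(c_i^j))$ that preserves $\preceq$, since $\preceq$ depends only on the indexing function $D : \N \to \N$. By Lemma~\ref{lemma:conv}, $\alpha$ preserves convergence of each diagonal, and by Corollary~\ref{cor:coincident} it preserves coincidence of any two convergent diagonals. Consequently, the reformulated condition passes from $(c_i^j)$ to $(\alpha(c_i^j))$, and the corollary follows. The main obstacle is really the characterization quoted just before the corollary, which is a purely point-set topological fact about $S$; once that is granted, the present corollary is essentially bookkeeping.
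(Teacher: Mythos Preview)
Your proposal is correct and follows essentially the same route as the paper: the paper states the characterization via sufficiently large diagonal subsequences immediately before the corollary and then deduces the result from Lemma~\ref{lemma:conv}. You have simply made explicit the role of coincidence (Corollary~\ref{cor:coincident}) in ensuring that the images of the large diagonals share a common limit point, which the paper leaves implicit in the phrase ``converge to $x$''.
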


\p{Convergence of a sequence to a curve is preserved}  For the next corollary, we say that a sequence $(c_i)$ of vertices of $\EFC(S)$ converges to a vertex $c$ if
\[
\lim (c_i) \in c.
\]
In this case we say that $c$ is a \emph{limit curve} for $(c_i)$.  We have that $c$ is a limit curve for $(c_i)$ if and only if the following condition holds: if $a$ is any vertex of $\EFC(S)$ that intersects the tail of $(c_i)$ then $a$ intersects $c$.  In particular we have the following corollary of Lemma~\ref{lemma:conv}, which follows by an argument similar to the one used for Lemma~\ref{lemma:conv}.  

\begin{corollary}
\label{cor:conv curve}
Let $S$ be a surface without boundary.  Automorphisms of $\EFC(S)$ respect the relationship between convergent sequences and limit curves.  More precisely, $c$ is a limit curve for a sequence of vertices $(c_i)$ and $\alpha$ is an element of $\Aut \EFC(S)$, then $\alpha(c)$ is a limit curve for $(\alpha(c_i))$.  
\end{corollary}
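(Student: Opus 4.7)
The plan is to imitate the strategy of Lemma~\ref{lemma:conv}: extract a combinatorial characterization of ``$c$ is a limit curve for $(c_i)$'' that refers only to convergence of $(c_i)$ and to incidence among vertices of $\EFC(S)$, so that the corollary falls out of Lemma~\ref{lemma:conv} together with the obvious fact that $\alpha$ preserves intersection. Specifically, I would prove that $c$ is a limit curve for $(c_i)$ if and only if
\begin{enumerate}
\item $(c_i)$ is convergent, and
\item every vertex of $\EFC(S)$ that intersects the tail of $(c_i)$ also intersects $c$.
\end{enumerate}

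The forward direction is essentially the same observation that powered Lemma~\ref{lemma:conv}: if $\lim(c_i) = x \in c$, condition (1) is immediate, and for (2) any vertex $a$ meeting infinitely many $c_{i_j}$ contains points $y_j \in a \cap c_{i_j}$ with $y_j \to x$, so closedness of $a$ forces $x \in a \cap c$.

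For the reverse direction, set $x = \lim(c_i)$ and suppose for contradiction that $x \notin c$. Since $c$ is closed, there is a closed disk $D \subseteq S$ with $x$ in its interior and $D \cap c = \emptyset$. Convergence of $(c_i)$ implies that all but finitely many $c_i$ lie in $D$, so Lemma~\ref{lemma:connect} applied with $R = D$ yields a simple closed curve $a \subseteq D$ meeting infinitely many $c_i$. This $a$ is a vertex of $\EFC(S)$ that intersects the tail of $(c_i)$ but is disjoint from $c$, contradicting (2).

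With the characterization in hand, the corollary is immediate. Suppose $c$ is a limit curve for $(c_i)$ and let $\alpha \in \Aut \EFC(S)$. Condition (1) transfers to $(\alpha(c_i))$ by Lemma~\ref{lemma:conv}. For (2), a vertex $a'$ intersects the tail of $(\alpha(c_i))$ if and only if $\alpha^{-1}(a')$ intersects the tail of $(c_i)$, in which case $\alpha^{-1}(a')$ meets $c$ by hypothesis, hence $a'$ meets $\alpha(c)$. The main obstacle is the reverse direction of the characterization: one must produce a vertex that meets infinitely many terms of the sequence while avoiding $c$, and the key point is that such a vertex can be constructed inside an arbitrarily small disk around the limit point via Lemma~\ref{lemma:connect}.
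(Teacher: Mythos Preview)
Your proposal is correct and follows exactly the approach the paper intends: the paper states the same characterization (that $c$ is a limit curve for $(c_i)$ iff every vertex intersecting the tail of $(c_i)$ also intersects $c$) just before the corollary and then says the result ``follows by an argument similar to the one used for Lemma~\ref{lemma:conv}.'' You have simply written out that argument, invoking Lemma~\ref{lemma:connect} on a small disk about the putative limit point to manufacture the contradicting vertex, which is precisely the similarity the paper is gesturing at.
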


\subsection{Finishing the proof}
\label{sec:efc pf}

We require one more lemma for the proof of Theorem~\ref{thm:efc}.

\begin{lemma}
\label{lemma:inj1}
For any surface $S$ without boundary, the natural map
\[
\nu : \Homeo(S) \to \Aut \EFC(S)
\]
is injective.
\end{lemma}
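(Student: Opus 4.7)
The plan is to show that any $h \in \Homeo(S)$ with $\nu(h) = \mathrm{id}$ must be the identity of $S$. By definition, $\nu(h) = \mathrm{id}$ means that $h$ fixes every vertex of $\EFC(S)$, i.e., $h$ carries each simple closed curve in $S$ (essential or inessential) to itself as a set.

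The key observation is the following point-incidence principle: if $c_1$ and $c_2$ are vertices of $\EFC(S)$ with $c_1 \cap c_2 = \{x\}$, then since $h$ fixes each $c_i$ setwise and $x \in c_1 \cap c_2$, we have $h(x) \in h(c_1) \cap h(c_2) = c_1 \cap c_2 = \{x\}$, forcing $h(x) = x$. The remaining task is thus to produce, for every $x \in S$, two vertices of $\EFC(S)$ meeting exactly at $x$. This is a routine local construction: since $S$ has no boundary, $x$ lies in a coordinate chart $U \cong \R^2$ with $x$ identified with the origin, and I can take $c_1, c_2$ to be two small circles in $U$ that are tangent at $x$ (for instance, the unit circles centered at $(\pm 1, 0)$). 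Both are inessential simple closed curves in $S$ with $c_1 \cap c_2 = \{x\}$, hence are vertices of $\EFC(S)$. Applying the principle to this pair gives $h(x) = x$, and since $x$ was arbitrary, $h = \mathrm{id}_S$.

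There is no substantial obstacle here; the argument is a soft, local one, and in particular does not require the convergent-sequence machinery of Section~\ref{sec:conv}. As an alternative, one could instead pick a convergent sequence $(c_i)$ of inessential curves with $\lim(c_i) = x$ (readily available since $S$ is locally Euclidean), observe that $h(c_i) = c_i$ for all $i$, and conclude from continuity of $h$ and uniqueness of limits in the Hausdorff space $S$ that $h(x) = \lim h(c_i) = \lim(c_i) = x$.
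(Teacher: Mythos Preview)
Your proof is correct and follows exactly the same approach as the paper: pick two curves meeting only at $x$, use that $h$ fixes each setwise, and conclude $h(x)=x$. The paper's version is terser (it does not spell out the tangent-circle construction), but the argument is identical.
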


\begin{proof}

Suppose that $f \in \Homeo(S)$ lies in $\ker \nu$, and let $x \in S$.  Let $c$ and $d$ be two vertices of $\EFC(S)$ with $c \cap d = \{x\}$.  Since $f(c)=c$ and $f(d)=d$, it follows that $f(x) = f(c \cap d) = c \cap d = x$.  Since $x$ was arbitrary, $f$ is the identity, as desired.
\end{proof}

\begin{proof}[Proof of Theorem~\ref{thm:efc}]

As in the statement of the theorem, let $\nu : \Homeo(S) \to \Aut \EFC(S)$ be the natural map.  As per the statement, we would like to show that $\nu$ is an isomorphism.  By Lemma~\ref{lemma:inj1}, the map $\nu$ is injective.  

We wish to construct a left inverse $\xi : \Aut \EFC(S) \to \Homeo(S)$ for $\nu$, as this will imply that $\nu$ is surjective.  For $\alpha$ an arbitrary element of $\Aut \EFC(S)$ let $f_\alpha : S \to S$ be the map given by the following rule: for $x \in S$ we choose a convergent sequence $(c_i)$ of vertices of $\EFC(S)$ with $\lim (c_i) = x$ and define
\[
f_\alpha(x) = \lim (\alpha(c_i)).
\]
The right hand side is well defined because $\alpha$ preserves convergent sequences (Lemma~\ref{lemma:conv}).  The function $f_\alpha$ is well defined and bijective by Corollary~\ref{cor:coincident}.

Our next goal is to show that each such $f_\alpha$ is a homeomorphism of $S$.  Since $f_\alpha^{-1} = f_{\alpha^{-1}}$, it suffices to show that $f_\alpha$ is continuous.  And since surfaces are first countable, the continuity of $f_\alpha$ can be verified by showing that it preserves limits points of convergent sequences.  But this is precisely the content of Corollary~\ref{cor:conv conv}.

Now that we have shown that $\xi : \Aut \EFC(S) \to \Homeo(S)$ is a well-defined homomorphism, it remains to show that $\xi \circ \nu$ is the identity.  To this end we require the following.

\medskip

\begin{itemize}[leftmargin=12.5ex]
\item[\emph{Claim 1.}] If $\alpha$ is an element of $\Aut \EFC(S)$ and $c$ is a vertex of $\EFC(S)$, then
\[
\xi(\alpha)(c) = \alpha(c).
\]
\item[\emph{Claim 2.}] If $f$ is an element of $\Homeo(S)$ and $c$ is a vertex of $\EFC(S)$, then 
\[
\nu(f)(c) = f(c).
\]
\end{itemize}

\medskip

\noindent The first claim follows from Corollary~\ref{cor:conv curve}, and the second follows from the definition of the natural map $\nu$.  

We may now prove that $\xi \circ \nu$ is the identity.  Since $\nu$ is injective, it suffices to show that $\nu \circ \xi \circ \nu(f)=\nu(f)$ for each $f \in \Homeo(S)$.  This is to say that $\xi \circ \nu(f)$ and $f$ have the same action on the set of vertices of $\EFC(S)$.  Let $c$ be an arbitrary vertex of $\EFC(S)$.  Applying the two claims in the previous paragraph in succession we have
\[
\xi \circ \nu(f)(c) = \nu(f)(c) = f(c).
\]
This completes the proof of the theorem.
\end{proof}


\section{Automorphisms of the fine curve graph}
\label{sec:pf}

For the proof of Theorem~\ref{thm:main}, we require one additional lemma.  The proof is the same as the proof of Lemma~\ref{lemma:inj1}

\begin{lemma}
\label{lemma:inj}
For $g \geq 2$, the natural map $\eta : \Homeo(S_g) \to \Aut \FC(S_g)$ is injective.
\end{lemma}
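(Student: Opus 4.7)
The plan is to mimic the proof of Lemma~\ref{lemma:inj1} verbatim, with the one caveat that for $\FC(S_g)$ we may only use essential simple closed curves as vertices rather than arbitrary simple closed curves. So the first task is to verify that, for $g \geq 2$, every point $x \in S_g$ is the unique intersection point of a suitable pair of essential curves.

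More precisely, first I would fix $x \in S_g$ and construct two vertices $c,d$ of $\FC(S_g)$ with $c \cap d = \{x\}$. Since $g \geq 2$, I can pick a nonseparating simple closed curve $c$ through $x$ and then a second nonseparating simple closed curve $d$ through $x$ whose homology class is dual to that of $c$, arranged so that $c$ and $d$ meet transversely in exactly the single point $x$ (for instance, realize $\{c,d\}$ as the two standard generators of a genus-one subsurface based at $x$). Both $c$ and $d$ are essential, hence vertices of $\FC(S_g)$.

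Next, suppose $f \in \ker \eta$. Then $\eta(f)$ acts trivially on the vertex set of $\FC(S_g)$, which means $f(c) = c$ and $f(d) = d$ as subsets of $S_g$ for every pair of essential curves. In particular, for the curves $c,d$ constructed above,
\[
f(x) = f(c \cap d) = f(c) \cap f(d) = c \cap d = x.
\]
Since $x \in S_g$ was arbitrary, $f$ is the identity homeomorphism, and so $\eta$ is injective.

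The only step requiring any thought is the construction of the pair $\{c,d\}$ with $c \cap d = \{x\}$, and this is routine once $g \geq 2$; everything else is a direct transcription of the argument given for Lemma~\ref{lemma:inj1}.
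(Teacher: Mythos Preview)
Your proof is correct and follows exactly the approach the paper indicates: the paper simply states that the proof is the same as that of Lemma~\ref{lemma:inj1}, and you have carried out precisely that argument, noting the one extra point that the curves $c$ and $d$ with $c \cap d = \{x\}$ must be essential, which is easily arranged for $g \geq 2$ (indeed for $g \geq 1$).
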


The proof of Theorem~\ref{thm:main} also requires a definition.  For a graph $\Gamma$ and a subgraph $\Delta$, we say that a map $\Aut \Delta \to \Aut \Gamma$ is an \emph{extension map} if each element of the image preserves $\Delta$ and further that each element of $\Aut \Delta$ is equal to the restriction of its image.

\begin{proof}[Proof of Theorem~\ref{thm:main}]

The proof has two steps.  The first step is to show that there exists an extension homomorphism $\varepsilon : \Aut \FC(S_g) \to \Aut \EFC(S_g)$.  The second step is to use $\varepsilon$ to complete the proof of the theorem.

\medskip

\noindent \emph{Step 1.} Let $\alpha \in \Aut \FC(S_g)$.  We would like to define an element $\hat \alpha \in \Aut \EFC(S_g)$.  We will then define $\varepsilon(\alpha)$ to be $\hat \alpha$.  For any essential simple closed curve $c$ in $S_g$ we define $\hat \alpha (c)$ to be $\alpha(c)$.  For an inessential curve $e$ in $S_g$, we take any bigon pair $\{c,d\}$ determining $e$ and define $\hat \alpha(e)$ to be the inessential curve determined by $\{\alpha(c),\alpha(d)\}$; this makes sense because of Proposition~\ref{prop:curve pairs}.

We would like to show that $\hat \alpha$ is a well defined bijection of the set of vertices of $\FC(S_g)$.  Suppose that $\{c',d'\}$ is another bigon pair that determines $e$.  It follows from Corollary~\ref{cor:lka} that there is a sequence of bigon pairs
\[
\{c,d\} = \{c_0,d_0\} = \cdots = \{c_n,d_n\} = \{c',d'\}
\]
where each pair $\{\{c_i,d_i\},\{c_{i+1},d_{i+1}\}\}$ is a linked sharing pair for $e$.  It follows then from Proposition~\ref{prop:curve pair pairs} that $\hat \alpha$ is well defined on the vertices of $\FC(S_g)$.  Indeed, a consequence of Proposition~\ref{prop:curve pair pairs} is that if two bigon pairs form a sharing pair then their images under an automorphism of $\FC(S_g)$ determine the same inessential curve.

To complete the first step, we must show that $\hat \alpha$ is indeed an automorphism of $\FC(S_g)$, that is, it takes edges to edges.  For an edge spanned by two essential curves, this is automatic from the definition.  For an edge spanned by one essential curve $c$ and one inessential curve $e$, this follows from the fact that we can find a bigon pair that determines $e$ and is disjoint from $c$.  The case of an edge spanned by two inessential curves is similar.  

By definition the map $\varepsilon : \Aut \FC(S_g) \to \Aut \EFC(S_g)$ given by $\varepsilon(\alpha) = \hat \alpha$ is the desired extension map.  

\medskip

\noindent \emph{Step 2.} Recall that $\eta : \Homeo(S_g) \to \Aut \FC(S_g)$ and $\nu : \Homeo(S_g) \to \Aut \EFC(S_g)$ are the natural homomorphisms.  By Theorem~\ref{thm:efc}, the map $\nu$ is an isomorphism.  Let $\varepsilon$ be the extension homomorphism guaranteed by the first step.  We consider the composition:
\[
\Homeo(S_g) \stackrel{\eta}{\to} \Aut \FC(S_g) \stackrel{\varepsilon}{\to} \Aut \EFC(S_g) \stackrel{\nu^{-1}}{\to} \Homeo(S_g).
\]
We claim that this composition is the identity.  Indeed, since $\nu$ is the natural map, and since $\varepsilon$ is an extension homomorphism, it follows that for any $f$ we have 
\[
\eta \circ \nu^{-1} \circ \varepsilon \circ \eta(f) = \eta(f).
\]
Since $\eta$ is injective (Lemma~\ref{lemma:inj}) it follows that 
\[
\nu^{-1} \circ \varepsilon \circ \eta(f) = f,
\]
which is to say that $\nu^{-1} \circ \varepsilon$ is a left inverse to $\eta$.  The theorem follows.
\end{proof}

\bibliographystyle{plain}
\bibliography{autfc}

\end{document}